\newcommand{\sm}{\left(\smallmatrix}
\newcommand{\esm}{\endsmallmatrix\right)}
\newcommand{\mat}{\begin{pmatrix}}
\newcommand{\emat}{\end{pmatrix}}
\renewcommand{\c}{\mathfrak{c}}
\newcommand{\f}{\mathfrak{f}}
\renewcommand{\t}{\tau}
\renewcommand{\a}{\alpha}
\renewcommand{\i}{\infty}
\newcommand{\G}{\Gamma}
\newcommand{\g}{\gamma}
\newcommand{\lt}{\left}
\newcommand{\rt}{\right}
\newcommand{\Q}{\mathbb Q}
\newcommand{\Z}{\mathbb Z}
\newcommand{\C}{\mathbb C}
\newcommand{\R}{\mathbb R}
\renewcommand{\H}{\mathbb H}
\renewcommand{\k}{\kappa}
\newcommand{\F}{\mathcal F}
\newtheorem{thm}{Theorem}
\newtheorem{cor}[thm]{Corollary}
\theoremstyle{definition}
\numberwithin{equation}{section}
\numberwithin{thm}{section}
\begin{document}

\title[Hecke Equivariance]{Hecke equivariance of generalized Borcherds products of type $O(2,1)$}

\author{Daeyeol Jeon, Soon-Yi Kang and Chang Heon Kim}
\address{Department of Mathematics Education, Kongju National University, Gongju, 32588 South Korea}
\email{dyjeon@kongju.ac.kr}
\address{Department of Mathematics, Kangwon National University, Chuncheon, 24341 South Korea} \email{sy2kang@kangwon.ac.kr}
\address{Department of Mathematics, Sungkyunkwan University,
 Suwon, 16419 South Korea}
\email{chhkim@skku.edu}

\begin{abstract} Recently, a weak converse theorem for Borcherds' lifting operator of type $O(2,1)$ for $\G_0(N)$ is proved and the logarithmic derivative of a modular form for $\G_0(N)$  is explicitly described in terms of the values of Niebur-Poincaré series at its divisors in the complex upper half-plane.  In this paper, we prove that the generalized Borcherds' lifting operator of type $O(2,1)$  is Hecke equivariant under the extension of Guerzhoy's multiplicative Hecke operator on the integral weight meromorphic modular forms and the Hecke operator on  half-integral weight vector-valued harmonic weak Maass forms. Additionally, we show that the logarithmic differential operator is also Hecke equivariant under the multiplicative Hecke operator and the Hecke operator on integral weight meromorphic modular forms.  As applications of Hecke equivariance of the two operators, we obtain relations for twisted traces of singular moduli modulo prime powers and congruences for twisted class numbers modulo primes, including those associated to genus $1$ modular curves. 
\end{abstract}
\maketitle

\renewcommand{\thefootnote}%
             {}
 {\footnotetext{
 2020 {\it Mathematics Subject Classification}: 11F03, 11F12, 11F22, 11F25, 11F30, 11F33, 11F37
 \par
 {\it Keywords}: Borcherds product, converse theorem for Borcherds product, Hecke operator, multiplicative Hecke operator, Heegner divisor, values of meromorphic forms at divisors}

\section{Introduction}
Borcherds isomorphism, established in his seminar paper \cite[Theorem 14.1]{Bor95}, is a correspondence between the additive group of weakly holomorphic modular forms of weight $1/2$ with respect to $\G_0(4)$ which satisfy Kohnen's plus-space condition and the multiplicative group of integral weight meromorphic modular forms for some character of $\mathrm{SL}_2(\Z)$ with integer coefficients and leading coefficient $1$ with Heegner divisors. Borcherds raised several questions in connection with this theorem in \cite[17.10 and 17.11]{Bor95}, including how to generalize the isomorphism to the level greater than one and whether his isomorphism commutes with the action of Hecke operators.

The former was partially answered by himself in \cite{Bor98}, in which he simplified the proofs of his earlier results and extended them for both weight and level. To be more precise, he constructed a multiplicative lifting map from weakly holomorphic modular forms of weight $1-n/2$ for the Weil representation associated to an even lattice $L$ of signature $(2,n)$ to meromorphic modular forms for the orthogonal group of $L$ whose divisors are linear combinations of Heegner divisors. These meromorphic modular forms have infinite product expansions so called Borcherds products  if the underlying quadratic space contains $\Q$-isotropic elements. Borcherds then asked in \cite[Problem 16.10]{Bor98} whether one can reverse this lifting and it has been answered affirmatively for a large class of arithmetic subgroups $\G$ of the orthogonal group of $L$. In \cite[Theorem 5.12]{Bruinier02} and \cite[Theorem 1.2]{Bruinier14}, Bruinier proved that if $n\geq 2$, the converse theorem holds true if $L$ splits off the orthogonal sum of two hyperbolic planes over $\Z$. Even so, the Borcherds lifting is not surjective for $n=1$, i.e., when $\G=\G_0(N)$, but in \cite{BS}, Bruinier and Schwagenscheidt recently established a weak converse theorem for Borcherds products on $\G_0(N)$. They proved that if $F$ is a meromorphic modular form for $\Gamma_0(N)$ with a unitary multiplier system whose divisor on the complex upper half-plane $\H$ is a linear combination of Heegner divisors and whose divisor at the cusps is defined over the rational number field and invariant under the Fricke involution, then $F$ is (up to a non-zero constant factor) the generalized Borcherds product associated to a unique harmonic Maass form of weight $1/2$.

The latter question was answered by Guerzhoy \cite{Guer} who succinctly proved that the Borcherds isomorphism is Hecke equivariant if one considers a multiplicative Hecke operator acting on the integral weight meromorphic modular forms and the usual Hecke operator acting on the half integral weight forms. In the present paper, we generalize this result to higher level by exploiting the weak converse theorem for Borcherds products on $\G_0(N)$ introduced earlier. 

On the other hand, the well-known denominator formula for the Monster Lie algebra is derived from the logarithmic derivative of the classical $j$-invariant.  In \cite{BKO}, Bruinier, Kohnen and Ono  showed that the logarithmic derivative of a meromorphic modular form $f$ on $\text{SL}_2(\Z)$ is expressible in terms of the values of Faber polynomials of $j(\t)$ at points $\t$ in the divisor of $f$, which generalizes the denominator formula.  These results have been generalized by Bringmann et al. \cite{BKLOR} and Choi,
Lee and Lim \cite{CLL} to Niebur-Poincaré harmonic weak Maass functions of arbitrary level $N$. They proved that the logarithmic derivative of a meromorphic modular form for $\G_0(N)$ is explicitly described in terms of the values of Niebur-Poincaré series at its divisors in $\H$. In this paper, we verify that this logarithmic differential operator defined on the multiplicative group of meromorphic modular forms for $\G_0(N)$ with a unitary multiplier system (possibly of infinite order) is also Hecke equivariant under the extension of Guerzhoy's multiplicative Hecke operator and the Hecke operator on the weight $2$ meromorphic modular forms for $\G_0(N)$.

  As applications, we first obtain relations for traces of singular moduli modulo prime powers similar to those found in \cite{Ahlgren}. In another application, we establish a similar relation modulo prime for the twisted traces defined in terms of the canonical basis of the space of the weakly holomorphic modular functions associated to modular curves of genus 1 of prime levels. The second application offers a congruence relation for the corresponding twisted class numbers modulo primes.

In the following section, we will introduce vector-valued harmonic Maass forms, generalized Borcherds products, and the logarithmic differential operator, along with results that will be used later. Then, in Section 3, we demonstrate the Hecke equivariance of the generalized Borcherds lifting and the logarithmic differential operator, and in Section 4, we discuss its applications.

\section{Preliminaries}\label{sec2}

\subsection{Generalized Borcherds Product}
We recall from \cite{Bor98, BF, BO, BS} some fundamental facts regarding vector valued harmonic Maass forms for the Weil representation. Let $N$ be a positive integer and $V$ be the rational quadratic space
$$V:=\{X\in \text{Mat}_2(\Q)| \text{tr}X=0\}$$ 
with the quadratic form $Q(X)=-N{\textrm{det}}(X)$. The corresponding bilinear form is given by $(X,Y)=N\text{tr}(XY)$ for $X,Y\in V$ and the signature of $V$ is $(2,1)$.
We choose the lattice 
$$L_N:=\left\{\mat b&-a/N\\c&-b \emat: a,b,c\in \Z\right\},$$ which has  level $4N$ and the dual lattice  
$$L'_N:=\left\{\mat b/2N&-a/N\\c&-b/2N \emat: a,b,c\in \Z\right\}.$$
The discriminant group $L'_N/L_N$ is identified with $\Z/2N\Z$ with quadratic form $Q(\gamma)=\g^2/4N$. We denote the standard basis elements of the group algebra $\C[L'_N/L_N]$ by $\mathfrak e_\g$ for $\g\in L'_N/L_N$, and write $\langle\cdot,\cdot\rangle$ for the standard scalar product, anti-linear on the second entry, such that $\langle \mathfrak e_\g,\mathfrak e_{\g'}\rangle=\delta_{\g,\g'}$. The weil representation $\rho_N$ associated with $L'_N/L_N$ is a unitary representation of the integral metaplectic group $\text{Mp}_2(\Z)$ on $\C[L'_N/L_N]$. We denote by $\bar{\rho}_N$ the corresponding dual Weil representation. 

A smooth function $f:\H\to\C[L'_N/L_N]$ is called a harmonic weak Maass form of weight $k$ for $\rho_N$ for the group $\text{Mp}_2(\Z)$ if 
\begin{enumerate}
\item $f$ transforms like a modular form of weight $k$ for $\rho_N$ under $\text{Mp}_2(\Z)$, 
\item $\Delta_k(f)=0$, where $\Delta_k$ is the weight $k$ hyperbolic Laplace operator,
\item $f$ is at most linear exponential growth at the cusp $[i\i]$.
\end{enumerate}
Let $H_{k,\rho_N}$ be the space of $\C[L'_N/L_N]$-valued harmonic weak Maass forms of weight $k$ and type $\rho_N$.
The Fourier expansion of any $f\in H_{k,\rho_N}$ gives a unique decomposition $f=f^++f^-$, where
$$f^+(\t)=\sum_{\g\in L'_N/L_N}\sum_{\substack{n\in\Q\\n\gg-\i}}c_f^+(n,\g)q^{\frac{n}{4N}}\mathfrak e_\g,$$
$$f^-(\t)=\sum_{\g\in L'_N/L_N}\sum_{\substack{n\in\Q\\ n<0}}c_f^-(n,\g)\G\lt(1-k,\frac{\pi |n| \text{Im}(\t)}{N}\rt)q^{\frac{n}{4N}}\mathfrak e_\g,$$
where $q:=e^{2\pi i \t}$ for $\t\in\H$ and  $\G(a,x)=\int_x^\i e^{-t}t^{a-1}dt$ is the incomplete gamma function. One sees from the transformation property for $\rho_N$ that $c_f^\pm(n,-\g)=(-1)^{k-1/2}c_f^\pm(n,\g)$ for all $n\in\Z$, $\g\in L'_N/L_N$, and $c_f^\pm(n,\g)=0$ unless $n\equiv \g^2\pmod{4N}$.

Let $\Delta$ be a fundamental discriminant and let $r\in\Z$ such that $\Delta\equiv r^2\pmod{4N}$. Also put
$$\tilde{\rho}_N=\begin{cases}\rho_N, \ \text{if}\ \Delta>0,\\
\bar{\rho}_N,\ \text{if}\  \Delta<0.\end{cases}$$
Bruinier and Ono established the following generalized Borcherds lift for harmonic weak Maass forms.
\begin{thm}\cite[Theorem 6.1]{BO}\label{bothm}
Let $f\in H_{k,\tilde{\rho}_N}$ be a harmonic weak Maass form with real coefficients $c^+(m,\g)$ for all $m\in\Q$ and $\g\in L'_N/L_N$. Moreover $c^+(n,\g)\in \Z$ for all $n\leq 0$. Then the infinite product
\begin{equation}\label{gbp}
\Psi_{\Delta,r}(\t,f)=q^{\nu_f}\prod_{n>0}\prod_{b(\Delta)}\lt(1-\zeta_{\Delta}^b q^n\rt)^{(\frac{\Delta}{b})c^+_f(\Delta n^2,rn)},
\end{equation}
where $\zeta_\ell:=e^{2\pi i/\ell}$,  $\nu_f\in\R$ is the Weyl vector at the cusp $[i\i]$ associated with $f$, converges for $\text{Im}(\t)$ sufficiently large and has a meromorphic continuation to all of $\H$ with the following properties:
\begin{enumerate}
\item It is a meromorphic modular form for $\G_0(N)$ with a unitary character $\sigma$ which may have infinite order.
\item The weight of $\Psi_{\Delta,r}(\t,f)$ is $c_f^+(0,0)$ when $\Delta=1$, and is $0$ when $\Delta\neq 1$.
\item The divisor of $\Psi_{\Delta,r}(\t,f)$ on $Y_0(N):=\G_0(N)\backslash\H$ is given by the linear combination
$$Z_{\Delta,r}(f)=\sum_{\g\in L'_N/L_N}\sum_{n<0}c_f^+(n,\g)Z_{\Delta,r}(n,\g)$$ of the twisted Heegner divisors $Z_{\Delta,r}(n,\g)$ that are defined in \cite[Section 5]{BO}.
\end{enumerate}
\end{thm}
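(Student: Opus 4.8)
The statement is a twisted, higher-level incarnation of Borcherds' singular theta lift, so the natural route is to realize $\Psi_{\Delta,r}(\t,f)$ as the exponential of a regularized theta integral. First I would set up the twisted Siegel theta kernel $\Theta_{\Delta,r}(\t,z)$ attached to the lattice $L_N$ and the Heegner datum $(\Delta,r)$: a function on $\H_\t\times\H_z$ (after identifying the symmetric space of $O(2,1)$ with $\H$) which transforms with weight $1/2$ and representation $\tilde\rho_N$ in $\t$, so that the pairing $\langle f(\t),\Theta_{\Delta,r}(\t,z)\rangle$ is $\mathrm{SL}_2(\Z)$-invariant, and which is $\G_0(N)$-invariant in $z$; the twist is built from the generalized genus character $\chi_\Delta$ on the vectors of $L_N'$ whose norm is divisible by $\Delta$, exactly as in \cite[\S5]{BO}. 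Then I would define the lift by the Harvey--Moore/Borcherds regularization,
\begin{equation*}
\Phi_{\Delta,r}(z,f)=\int_{\F}^{\mathrm{reg}}\big\langle f(\t),\Theta_{\Delta,r}(\t,z)\big\rangle\,\frac{du\,dv}{v^2},
\end{equation*}
that is, as the constant term at $s=0$ of the meromorphic continuation in $s$ of $\lim_{T\to\i}\int_{\F_T}\langle f,\Theta_{\Delta,r}\rangle v^{s}\,v^{-2}\,du\,dv$, and check that this defines a real-analytic function on $\H\setminus Z_{\Delta,r}(f)$.

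The heart of the argument is two computations. \emph{Singularities.} Near a Heegner point $z_0$ belonging to $Z_{\Delta,r}(n,\g)$, one isolates the contribution of the rank-one sublattice of $L_N'$ that degenerates there; following \cite[\S6]{Bor98} and its twisted refinement in \cite[\S6]{BO}, this gives $\Phi_{\Delta,r}(z,f)=-2\,c_f^+(n,\g)\log|z-z_0|^{2}+O(1)$ near $z_0$, so that summing over the divisor yields a function with logarithmic singularities exactly along $Z_{\Delta,r}(f)$ with the stated multiplicities. \emph{Fourier expansion.} For $\mathrm{Im}(z)$ large I would unfold $\Phi_{\Delta,r}$ by splitting the lattice vectors according to their lower-left entry (the standard Borcherds unfolding), integrate the resulting Gaussians and incomplete-Gamma factors term by term, and recognize the outcome as $-2\log\bigl|\Psi_{\Delta,r}(z,f)\bigr|$, with $\Psi_{\Delta,r}$ the product in \eqref{gbp}, minus $c_f^+(0,0)\log(\sqrt N\,\mathrm{Im}(z))$ plus a constant; the $\log\mathrm{Im}(z)$ term is present only when $\Delta=1$ because for $\Delta\neq1$ the relevant $\g=0$ coset coefficient is removed by the twisted sum, which is the source of the weight dichotomy. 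This same computation establishes convergence of the product for $\mathrm{Im}(z)\gg0$, using the polynomial growth bound on the coefficients $c_f^+(\Delta n^2,rn)$ with $n>0$ coming from harmonicity of $f$.

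Combining the two: the function $-\tfrac12\Phi_{\Delta,r}(\cdot,f)$ is a $\G_0(N)$-invariant real-analytic function on $\H$ with logarithmic singularities along the $\Q$-rational divisor $Z_{\Delta,r}(f)$ which, near $[i\i]$, equals the logarithm of a weight-$c_f^+(0,0)$ Petersson metric. By the regularity argument of Borcherds \cite[\S6, \S10--13]{Bor98}, in the $\G_0(N)$ form used in \cite{BS}, such a function is the logarithm of the Petersson norm of a genuine meromorphic modular form; this form is $\Psi_{\Delta,r}(\t,f)$ (up to a constant), necessarily of weight $c_f^+(0,0)$ when $\Delta=1$ and $0$ otherwise, with divisor $Z_{\Delta,r}(f)$ on $Y_0(N)$ and with the asserted product expansion. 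Tracking how $\G_0(N)$ permutes the Weyl-vector data $\nu_f$ at the cusps produces the unitary multiplier $\sigma$, which has infinite order precisely when the Weyl vectors are irrational.

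I expect the main obstacle to be the Fourier-expansion step when $f$ is merely harmonic rather than weakly holomorphic: one must carry the regularization through the unfolding and show that the contribution of $f^-$ to $\Phi_{\Delta,r}$ is smooth on all of $\H$, hence leaves both the singularities and the product unchanged, so that $\Psi_{\Delta,r}$ depends only on the $c_f^+$. This, together with the coset bookkeeping forced by the genus-character twist (which governs which lattice cosets survive, and hence the weight dichotomy), is the delicate part; the singularity analysis, by contrast, follows Borcherds' template closely once the twisted kernel is in place.
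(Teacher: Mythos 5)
This statement is quoted by the paper directly from \cite[Theorem 6.1]{BO}; the paper supplies no proof of it, so there is no internal argument to compare against, and the relevant benchmark is Bruinier--Ono's original proof (itself an adaptation of Borcherds' singular theta lift \cite{Bor98}). Your sketch follows essentially that route: the twisted Siegel theta kernel built from the generalized genus character $\chi_\Delta$ (this is exactly the construction of \cite[Section 5]{BO}), the regularized integral, the computation of logarithmic singularities along the twisted Heegner divisors with multiplicities $c_f^+(n,\g)$, and the unfolding of the Fourier expansion for $\mathrm{Im}(z)$ large to recover $\log\bigl|\Psi_{\Delta,r}\bigr|$ together with the $\log\mathrm{Im}(z)$ term whose presence only for $\Delta=1$ accounts for the weight dichotomy. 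The one step where your outline is materially thinner than what is actually needed is the passage from the invariant function $\Phi_{\Delta,r}(\cdot,f)$ to the meromorphic form: when $f$ is merely harmonic, $-\tfrac12\Phi_{\Delta,r}(\cdot,f)$ is not pluriharmonic off the divisor (its Laplacian sees $\xi_{1/2}(f)$), so Borcherds' ``logarithm of a Petersson norm'' regularity argument does not apply verbatim. It is not enough to show, as you propose, that the $f^-$-contribution is smooth and ``leaves the product unchanged''; one must identify it as a separately $\G_0(N)$-invariant term (a convergent, unregularized theta-type integral attached to $\xi_{1/2}(f)$), so that the remaining piece $-4\log|\Psi_{\Delta,r}|+c_f^+(0,0)\log\mathrm{Im}(z)$ is itself invariant. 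Only then does the modulus-invariance argument give $\Psi_{\Delta,r}(\gamma z)=\sigma(\gamma)(cz+d)^{c_f^+(0,0)}\Psi_{\Delta,r}(z)$ with $|\sigma(\gamma)|=1$, which is precisely why the conclusion is a unitary character possibly of infinite order (its finiteness is governed by rationality of the coefficients, not merely by the Weyl vector as you suggest). With that point made precise, your proposal is a faithful outline of the strategy of \cite{BO}.
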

\noindent The Jacobi symbol $(\frac{\Delta}{b})$ in \eqref{gbp} is defined by $1$ if $\Delta=1$ and $0$ otherwise, when $b=0$. We note that $\Psi:=\Psi_{\Delta,r}(\t,f)$ is a map from $H_{1/2,\tilde{\rho}_N}$ to  the multiplicative group of integer weight meromorphic modular forms for $\G_0(N)$ with a unitary multiplier system (possibly of infinite order).

Recently, Bruinier and Schwagenscheidt proved that this lifting is reversible.
Let $\mathcal M(N)$ be the multiplicative group of meromorphic modular forms $F$ for $\G_0(N)$ with a unitary multiplier system (possibly of infinite order). When $\Delta=1$, we suppose that the divisor of $F$ on $Y_0(N)$ is a linear combination of Heegner divisors and whose divisor at the cusps is defined over the rational number field and invariant under the Fricke involution. If $\Delta\neq 1$, we suppose that the divisor of $F$ on $X_0(N):=\G_0(N)\backslash(\H\cup\Q\cup\{[i\i]\})$ is a linear combination of the twisted Heegner divisors $Z_{\Delta,r}(n,\g)$. Let $\mathcal{M}'(N)$ be the subgroup of $\mathcal M(N)$ composed of meromorphic modular forms satisfying these conditions. 
\begin{thm}\cite[Theorem 1.1, Theorem 1.5]{BS}\label{conv}
 If $F\in\mathcal M'(N)$, then there exists a harmonic Maass form $f\in H_{1/2,\tilde{\rho}_N}$ whose (twisted) generalized Borcherds product $\Psi_{\Delta, r}(f,z)$ is a non-zero constant multiple of $F$. 
\end{thm}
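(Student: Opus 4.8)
The plan is to run the classical ``obstruction plus comparison'' strategy for converse theorems of Borcherds type, adapted to $\G_0(N)$ and the twist by $(\Delta,r)$. Namely, I would reconstruct from the divisor of $F$ a candidate input $f\in H_{1/2,\tilde\rho_N}$ whose product $\Psi:=\Psi_{\Delta,r}(\t,f)$ from Theorem~\ref{bothm} has the same divisor as $F$ on all of $X_0(N)$; then $G:=F/\Psi$ is a meromorphic modular form for $\G_0(N)$ of weight $0$ with a unitary multiplier system and empty divisor on the \emph{compact} Riemann surface $X_0(N)$, so $\log|G|$ is a bounded harmonic function on $X_0(N)$ punctured at the cusps, hence extends to a constant; therefore $|G|$ is constant and $G$, holomorphic of constant modulus, is a nonzero constant, i.e.\ $F=c\,\Psi_{\Delta,r}(\t,f)$. (Equivalently, one may compare the automorphic Green functions $-\log\|F\|^2$ and $-\log\|\Psi\|^2$, both Green functions for the same Heegner divisor, so that their difference is harmonic and bounded on $X_0(N)$, hence constant.)

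First I would read off from $F\in\M'(N)$ a candidate principal part. Writing $\mathrm{div}(F)$ on $X_0(N)$ as $\sum_{n<0}\sum_{\g}c(n,\g)Z_{\Delta,r}(n,\g)$ plus, when $\Delta=1$, a cuspidal part that is rational and invariant under the Fricke involution, I would set
$$P_F:=\sum_{\g\in L'_N/L_N}\ \sum_{\substack{n<0\\ n\equiv\g^2(4N)}}c(n,\g)\,q^{\frac n{4N}}\mathfrak e_\g\ +\ c_0\,\mathfrak e_0,$$
symmetrized as $\tilde\rho_N$ requires, with the constant $c_0$ pinned down so that the weight $c_0$ of the putative product equals the weight of $F$ when $\Delta=1$ (and $c_0=0$ when $\Delta\neq1$). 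The $c(n,\g)$ are integers, being divisor multiplicities, so the integrality hypothesis of Theorem~\ref{bothm} is automatically met.

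The heart of the argument is to produce $f\in H_{1/2,\tilde\rho_N}$ with real coefficients and principal part $P_F$ for which in addition $\mathrm{div}(\Psi)=\mathrm{div}(F)$ on \emph{all} of $X_0(N)$, cusps included. On $Y_0(N)$, by part~(3) of Theorem~\ref{bothm} it is enough that $f^+$ have principal part $P_F$. The obstruction to prescribing the principal part of a weight-$1/2$ harmonic Maass form for $\tilde\rho_N$ lies in a space of weight-$3/2$ modular forms for the dual representation $\bar{\tilde\rho}_N$ (Borcherds' duality, or the Bruinier--Funke pairing): its cuspidal part gives the relations $\sum_{n<0,\g}c(n,\g)a_g(-n,\g)=0$ for every cusp form $g=\sum a_g(m,\g)q^{m/4N}\mathfrak e_\g\in S_{3/2,\bar{\tilde\rho}_N}$, while the Eisenstein part is absorbed into the choice of $c_0$ together with the rationality of the cuspidal divisor. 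The cuspidal relations hold because $\mathrm{div}(F)$ is a principal divisor: via the (twisted) Shintani/theta correspondence one interprets $a_g(-n,\g)$ through the pairing of $Z_{\Delta,r}(n,\g)$ with the weight-$2$ form attached to $g$, so that the obstruction sum depends on $F$ only through the class of $\mathrm{div}(F)$ in the divisor class group of $X_0(N)$, and hence vanishes. Finally one matches the cuspidal divisors of $\Psi$ and $F$: since the divisor of $\Psi$ at the cusps is controlled by the (non-principal) coefficients $c_f^+(\Delta n^2,rn)$, $n\geq1$, together with the Weyl vector $\nu_f$, one has enough freedom — in the shadow $\xi_{1/2}f\in S_{3/2,\bar{\tilde\rho}_N}$ of $f$ — to make them agree, precisely because the cuspidal divisor of $F$ is rational and Fricke-invariant; this is what makes the theorem only a \emph{weak} converse.

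I expect the main obstacle to be the vanishing of the cuspidal obstruction, i.e.\ converting the \emph{multiplicative} fact that $F$ exists into the \emph{additive} relations $\sum c(n,\g)a_g(-n,\g)=0$: this requires the precise compatibility among the twisted Heegner-divisor pairing, the Shintani lift, and the divisor class map. The other delicate point, governed by the same cuspidal hypotheses on $F$, is arranging that $\nu_f$ and all the cuspidal orders of $\Psi$ coincide with those of $F$, so that $G=F/\Psi$ is genuinely holomorphic and nonvanishing at the cusps and the comparison step of the first paragraph goes through.
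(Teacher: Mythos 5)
First, note that the paper does not prove this statement at all: Theorem~\ref{conv} is quoted verbatim from Bruinier--Schwagenscheidt \cite{BS}, so your sketch has to be measured against their argument rather than anything in this paper. Your broad outline (prescribe a principal part from $\mathrm{div}(F)$, form $\Psi_{\Delta,r}(\t,f)$, and show the quotient $G=F/\Psi$ is constant by a maximum-principle/Green-function comparison on the compact surface) is indeed the right skeleton, but two of your key steps do not hold up. The first is the ``obstruction'' discussion: for \emph{harmonic} Maass forms of weight $1/2$ one can always realize an arbitrary (symmetrized) principal part --- the pairing conditions against $S_{3/2,\bar{\tilde{\rho}}_N}$ are the obstruction to finding a \emph{weakly holomorphic} form, not a harmonic one --- so no vanishing statement is needed; worse, your claim that these cuspidal obstructions vanish ``because $\mathrm{div}(F)$ is principal'' is false in general (if it were true, $f$ could always be taken weakly holomorphic, which contradicts the whole point of the Bruinier--Ono twisted theory, where the failure of $f$ to be weakly holomorphic is tied to nonvanishing of derivatives of $L$-functions).

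The second and more serious gap is the cuspidal matching, which is precisely the mathematical content of the theorem and the reason it is only a \emph{weak} converse. After matching divisors on $Y_0(N)$, the quotient $F/\Psi$ is a modular form with unitary multiplier whose divisor is supported at the cusps, and in signature $(2,1)$ such forms need not be constant (e.g.\ $\eta$-quotients), so one must show that every divisor supported at the cusps which is rational and Fricke-invariant is itself the divisor of a Borcherds product of some element of $H_{1/2,\tilde{\rho}_N}$ with vanishing principal part. You assert this by appealing to ``enough freedom in the shadow $\xi_{1/2}f$,'' but modifying the shadow (equivalently, adding forms with zero principal part) changes the cuspidal orders only through their Weyl vectors at all cusps of $\G_0(N)$, and identifying exactly which cusp-supported divisors arise this way is the hard step: in \cite{BS} it is carried out by explicit constructions (Borcherds lifts of unary theta series and weight $3/2$ Eisenstein series, generalized $\eta$-quotients, and Weyl-vector computations at every cusp), and rationality plus Fricke-invariance enter as the precise characterization of the attainable subgroup, not as a soft sufficiency you may assume. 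As written, your sketch presupposes this sufficiency, i.e.\ it assumes the theorem's essential content; the remaining comparison argument (bounded harmonic $\log|G|$ on $X_0(N)$, hence $G$ constant) is fine but is the easy part.
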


\subsection{Logarithmic Derivative}
From now on, we assume $k\in \Z$ and $\k\in\frac12+\Z$. Ramanujan's differential operator
$\Theta:=q\frac{d}{dq}=\frac{1}{2\pi i}\frac{d}{d\t}$ satisfies
$$\Theta\left(\sum_{n=h}^\i a(n)q^n\right)=\sum_{n=h}^\i na(n)q^n.$$
For a meromorphic modular form $f$ of weight $k$ on $\G_0(N)$, the Serre derivative $\displaystyle{\partial_k(f):=\Theta(f)-\frac{kE_2f}{12}}$ preserves modularity with weight raised by $2$, where $E_k(\t)$ is the Eisenstein series of weight $k$. 
We let $M^{mero}_k(N)$ ($M_k(N)$, resp.) denote the space of meromorphic  (holomorphic, resp.) modular forms of weight $k$ on $\G_0(N)$ and define a lifting (in fact, the logarithmic derivative) $\mathfrak {D}: \mathcal M(N)\to M^{mero}_2(N)$ by 
\begin{equation*}\label{defDV}
\displaystyle{\mathfrak{D}(f):=\frac{\partial_k(f)}{f}}=\frac{\Theta(f)}{f}-\frac{kE_2}{12}.
\end{equation*}

As an example, consider the elliptic modular $j$-function 
$$j(\t):=q^{-1}+744+196884q+21493760q^2+\cdots.$$
For the normalized Hecke operator $T(n)$,
the weakly holomorphic modular functions given by
\begin{equation*}\label{hj}J_{n}(\t):=(j(\t)-744)|T(n), \quad (n\geq 1)
\end{equation*}
and $J_0(\t):=1$ form a basis of the space of weakly holomorphic modular functions on $\G_0(1)$. 
For every $\t\in\H$, Asai, Kaneko and Ninomiya  \cite[Theorem 3]{AKN} proved that
\begin{equation}\label{df}
\mathfrak{D}(j(\t)-j(z))=\frac{1}{2\pi i}\frac{j'(\t)}{j(\t)-j(z)}=-\sum_{n=0}^\i J_{n}(z) q^n.
\end{equation}
In \cite[Theorem 1]{BKO}, Bruinier, Kohnen and Ono generalized this by proving that if 
\begin{equation*}\label{f}
f(\t)=q^h+\sum_{n=h+1}^\i a_f(n)q^n
\end{equation*} is a non-zero weight $k$ meromorphic modular form on $\G_0(1)$, then 
\begin{equation*}\label{serco}
\mathfrak{D}(f)=h-\frac{k}{12}-\sum_{n=1}^\i \lt(\sum_{z\in \F_1}\frac{\hbox{ord}_{z}(f)}{e_{1,z}} J_{n}(z)\rt)q^n\in M_2^{mero}(1),
\end{equation*}
where $\F_N$ is the fundamental domain of $\G_0(N)$ and $e_{N,z}$ is the ramification index of the covering $\H\rightarrow \G_0(N)\backslash\H$ at $z$.

Bringmann et al. \cite{BKLOR} and Choi, Lee, and Lim \cite{CLL} have extended these results to Niebur-Poincaré harmonic weak Maass functions $J_{N,n}$ of arbitrary level $N$ by proving that the logarithmic derivative of a meromorphic modular form for $\G_0(N)$ is described explicitly in terms of the values of  $J_{N,n}$ at its divisors in $\H$.
In \cite[Theorem 2.1]{JKK-divisor}, the authors showed that  the logarithmic derivative of a meromorphic modular form for $\G_0(N)$ can be expressed explicitly in terms of the values of weakly holomorphic modular functions.
Let $M_k^!(N)$ denote the space of scalar-valued weakly holomorphic modular forms of weight $k$ with respect to $\G_0(N)$ and $M_k^\sharp(N)$ be the subspace of $M_k^!(N)$ with poles allowed only at the cusp $[i\infty]$. We assume $[i\infty]$ is not the Weierstrass point.
When $g_N$ is the genus of $X_0(N)$, the canonical basis elements of $M_0^\sharp(N)$,  denoted by $\mathfrak{f}_{N,m}$, have the  Fourier expansions of the form
\begin{equation*}\label{fn}
\mathfrak{f}_{N,m}(\t)=q^{-m}+\sum_{\ell=1}^{g_N} a_{N}(m,-\ell)q^{-\ell}+\sum_{n=1}^{\i} a_{N}(m,n)q^{n},\ (m\geq g_N+1).
\end{equation*}
In addition, $\f_{N,0}=1$ and $\f_{N,m}=0$ for $1\leq m\leq g_N$. (See \cite{JKK-hecke} for the construction of the basis in detail.)
 For every point $z\in\H$, we define
\begin{equation*}
{F}_{N,z}(\t):=\sum_{n=0}^\i \f_{N,n}(z)q^n=1+\sum_{n=g+1}^\i \f_{N,n}(z)q^n.
\end{equation*}
According to \cite[Theorem 2.1]{JKK-divisor}, for a meromorphic modular form $f$ of weight $0$ on $\G_0(N)$, one has
\begin{equation*} \label{divisor}
\sum_{z\in \F_N} \frac{{\rm ord}_z(f)}{e_{N,z}} F_{N,z}(\t)+\frac{\Theta (f)}{f}\in M_2(N).
\end{equation*}

\section{Hecke equivariance}

We first define the action of Hecke operators on half-integral weight vector-valued harmonic weak Maass forms by their action on Fourier expansions following \cite[Section 7]{BO} and \cite[Section 2]{BS}. If $g\in H_{\k,\tilde{\rho}_N}$ and its holomorphic part has the Fourier expansion $g^+=\sum_{\g,n}b^+_g(n,\g)q^{\frac{n}{4N}}\mathfrak e_\g$, then for a prime $p$ not dividing $N$, the Fourier expansion of the holomorphic part of $g|T_\k(p^2)$ is given by
$$(g|T_\k(p^2))^+=\sum_{\g,n} b^+_{g|T_\k}(n,\g)q^{\frac{n}{4N}}\mathfrak e_\g,$$ 
where
\begin{equation}\label{genhecke}
b^+_{g|T_\k}(n,\g)=b^+_g(p^2n,p\g)+p^{\k-3/2}\left(\frac{\sigma n}{p}\right)b^+_g(n,\g)+p^{2\k-2}b^+_g(n/p^2,\g/p).
\end{equation}
Here $\sigma=1$ if $\tilde{\rho}_N=\rho_N$, and $\sigma=-1$ if $\tilde{\rho}_N=\bar\rho_N$.

Next we define a multiplicative Hecke operator acting on $\mathcal M(N)$, the multiplicative group of integer weight meromorphic modular forms for $\G_0(N)$ with a unitary multiplier system (possibly of infinite order).
We let $\mathcal M_{k}(N)\subset\mathcal M(N)$ denote the subset that consists of modular forms of weight $k$. 
  For a matrix $\a=\sm a&b\\c&d\esm$ with a positive determinant, the action of weight $k$ slash operator on meromorphic functions on $\H$ is given by
$$f|_k[\a]:=f\lt(\frac{a\t+b}{c\t+d}\rt)(c\t+d)^{-k}(\text{det}\a)^{k/2}.$$
Let $p$ be a prime and $f\in \mathcal M_{k}(N)$.  Following Guerzhoy \cite{Guer}, we define the multiplicative Hecke operator $\mathcal T(p)$ acting on $\mathcal M(N)$ by
$$f|\mathcal T(p):=f|\mathcal T_{k}(p):=\varepsilon p^{k(p-1)/2}\prod_{i=1}^{p+1}f|_k[\alpha_i],$$
where $\varepsilon$ is a constant chosen so that the leading coefficient of $f|\mathcal T(p)$ is $1$ and  $\alpha_i$'s are the right coset representatives of the action of $\G_0(N)$ for the set of $2\times 2$ matrices with integer entries and determinant $p$. 
Then $\mathcal T(p)$ maps elements of $\mathcal M_{k}(N)$ to elements of $\mathcal M_{(p+1)k}(N)$. 

Let $M^!_{k,\rho_N}$ be the subspace of vector-valued weakly holomorphic modular forms of  $H_{k,{\rho}_N}$ and $M^!_{1/2}(4N)$ denote the space of scalar-valued weakly holomorphic modular forms of weight $1/2$ with respect to $\G_0(4)$ which satisfy Kohnen's plus-space condition.
Borcherds established in \cite[Theorem 14.1]{Bor95} an isomorphism between the additive group $M^!_{1/2}(4)$  and the multiplicative group $\mathcal M(1)$. 
Recall the generalized Borcherds lifting ${\Psi}:H_{1/2,\tilde{\rho}_N} \to \mathcal M(N) $ in Theorem \ref{bothm}. We generalize the Hecke equivariance of Borcherds isomorpshim in \cite{Guer} to that of generalized Borcherds products.

\begin{thm}\label{HB}
Let $H'_{\k,\widetilde{\rho}_N}$ be the additive subgroup of $H_{\k,\widetilde{\rho}_N}$ consisting of forms that satisfy the conditions in Theorem \ref{bothm}. Then the following diagrams of groups and their homomorphisms are commutative when $p$ is a prime not dividing $N$ nor the discriminant $\Delta$:
$$\begin{tikzcd}
H'_{1/2,\tilde{\rho}_N} \arrow[r,"\Psi"] \arrow[d, "pT_{1/2}(p^2)"] & \mathcal M(N)  \arrow[d, "\mathcal T(p)"] \\
H'_{1/2,\tilde{\rho}_N} \arrow[r,"\Psi"] & \mathcal M(N) \\
\end{tikzcd}$$
\end{thm}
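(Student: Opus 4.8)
The plan is to establish commutativity of the diagram by comparing divisors and leading behavior of the two meromorphic modular forms $\Psi(g)|\mathcal T(p)$ and $\Psi(g|pT_{1/2}(p^2))$, and then invoke the weak converse theorem (Theorem \ref{conv}) together with the fact that a meromorphic modular form in $\mathcal M(N)$ is determined by its divisor on $X_0(N)$ up to a nonzero constant, with the leading-coefficient-$1$ normalization pinning down that constant. So the three things to check are: (i) both sides lie in $\mathcal M'(N)$ (i.e.\ have divisor a linear combination of the appropriate twisted Heegner divisors, with the cusp-divisor rationality/Fricke condition in the $\Delta=1$ case) so that the converse theorem applies; (ii) the two sides have the same divisor on $Y_0(N)$; (iii) the cusp divisors and Weyl vectors agree, so the normalizing constants match and the two products are literally equal.

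First I would unwind the divisor of $\Psi(g)|\mathcal T(p)$. By Theorem \ref{bothm}(3), $\mathrm{div}(\Psi(g)) = Z_{\Delta,r}(g) = \sum_{\g,n<0} c^+_g(n,\g) Z_{\Delta,r}(n,\g)$ on $Y_0(N)$, and since $\mathcal T(p)$ is (up to constant and a power of $p$) the product $\prod_{i=1}^{p+1} \Psi(g)|_k[\alpha_i]$ over coset representatives of $\G_0(N)\backslash\{\det = p\}$, its divisor is $\sum_i [\alpha_i]^*\mathrm{div}(\Psi(g))$, i.e.\ the image of the twisted Heegner divisor under the geometric Hecke correspondence $T_p$ on $X_0(N)$. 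The key computational input is the well-known behavior of (twisted) Heegner divisors under $T_p$ for $p\nmid N\Delta$: the cycle $T_p\, Z_{\Delta,r}(n,\g)$ decomposes into Heegner divisors of discriminants $p^2 n$, $n$, and $n/p^2$ with exactly the combinatorial weights $1$, $p^{-1}\left(\frac{\Delta}{p}\right)$ (or the analogous genus-character factor), and $p$ that appear in \eqref{genhecke} — this is the Heegner-divisor analogue of the Brandt/Shimura relations and is exactly what makes the two Hecke actions compatible. Matching this against the Fourier coefficients produced by $pT_{1/2}(p^2)$ in \eqref{genhecke}, and using $c^\pm(n,\g)=0$ unless $n\equiv\g^2\pmod{4N}$ together with the genus-character identities relating $\left(\frac{\sigma n}{p}\right)$ to the twist, I would get that the divisor of $\Psi(g)|\mathcal T(p)$ on $Y_0(N)$ equals $Z_{\Delta,r}(g|pT_{1/2}(p^2))$, which by Theorem \ref{bothm}(3) again is the divisor of $\Psi(g|pT_{1/2}(p^2))$. (One must also check $g|pT_{1/2}(p^2)$ still lies in $H'_{1/2,\tilde\rho_N}$, i.e.\ the integrality and reality of the principal-part coefficients are preserved by \eqref{genhecke}; this is where the normalization by $p$ in $pT_{1/2}(p^2)$ and the hypothesis $p\nmid N\Delta$ are used.)

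Next, for the cusps: in the case $\Delta\neq1$ the forms have weight $0$ and the cusp divisor is also a linear combination of the twisted Heegner divisors, so the same Hecke-correspondence computation handles the divisor at the cusps, and Theorem \ref{conv} applies directly; in the case $\Delta=1$ I would verify that the divisor at the cusps of $\Psi(g)|\mathcal T(p)$ remains rational and Fricke-invariant (the Fricke involution commutes with $T_p$ for $p\nmid N$ up to relabeling cosets, so Fricke-invariance is preserved), and compare the order at $[i\infty]$, which is governed by the Weyl vector $\nu_f$; the transformation of the Weyl vector under $T_p$ should match the effect of \eqref{genhecke} on $c^+(n,\g)$ for $n\le 0$. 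Having matched divisors on all of $X_0(N)$, the converse theorem gives $\Psi(g)|\mathcal T(p) = c\cdot \Psi(g|pT_{1/2}(p^2))$ for some nonzero constant $c$, and since both sides are normalized to have leading Fourier coefficient $1$ (the multiplicative Hecke operator $\mathcal T(p)$ carries exactly such a normalization built into $\varepsilon$, and generalized Borcherds products are monic by \eqref{gbp}), we conclude $c=1$.

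The main obstacle I anticipate is step (ii): proving the precise identity between the pushforward of a \emph{twisted} Heegner divisor under the geometric Hecke correspondence $T_p$ and the twisted Heegner divisor attached to the Hecke-shifted coefficients \eqref{genhecke}. This requires carefully tracking the genus character (the $\left(\frac{\Delta}{\cdot}\right)$ twist in the definition of $Z_{\Delta,r}(n,\g)$) through the action of integral matrices of determinant $p$ on the lattice $L_N$ and its discriminant form $\Z/2N\Z$, checking that the three orbits of Heegner points (coming from $p^2n$, $n$, $n/p^2$) appear with the correct multiplicities and correct values of the residue class $r\bmod 2N$, and confirming the middle term picks up exactly the factor $p^{\kappa-3/2}\left(\frac{\sigma n}{p}\right)$ after accounting for the weight-$k$ slash normalization's power of $p$. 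Guerzhoy's argument in \cite{Guer} does the analogous bookkeeping at level $1$ and $\Delta=1$; the added layers here are the level $N$, the Fricke/cusp condition, and the quadratic twist, each of which I expect to be technical but not conceptually new.
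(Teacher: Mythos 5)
Your strategy is genuinely different from the paper's, and it has real gaps. The paper proves the theorem by a direct manipulation of the infinite product: writing $f=\Psi(\phi)$ as in \eqref{gbp}, applying $\mathcal T(p)$ through the explicit coset representatives $\sm p&0\\0&1\esm$ and $\sm 1&j\\0&p\esm$, and regrouping the resulting factors $(1-\zeta_\Delta^b q^{pn})$, $(1-\zeta_\Delta^b q^{n})^{pc_b(pn,r)}$ and $(1-\zeta_\Delta^{pb}q^{n})$ so that the exponents become exactly the coefficients $p\,c^+_{\phi|T_{1/2}(p^2)}(|\Delta|n^2,rn)$ prescribed by \eqref{genhecke} (this is the computation \eqref{mpt}, i.e.\ Guerzhoy's level-one argument \cite{Guer} carried over with the twist by $\zeta_\Delta$). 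It never invokes the converse theorem, divisors, or any cusp conditions.

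The first gap in your plan is that its central step (ii) --- the identity between the pushforward of the twisted Heegner divisor $Z_{\Delta,r}(n,\g)$ under the geometric Hecke correspondence $T_p$ and the combination of twisted Heegner divisors encoded by \eqref{genhecke} --- is asserted as ``well-known'' but never proved, and by Theorem \ref{bothm}(3) it is essentially equivalent to the statement that the two sides of the diagram have equal divisors, i.e.\ to the theorem itself modulo normalization. Deferring the crux to an uncited ``Brandt/Shimura relations with a genus-character twist at level $N$'' is exactly the bookkeeping you yourself flag as the main obstacle, so the proposal does not close. The second gap is structural: your endgame needs both $\Psi(\phi)|\mathcal T(p)$ and $\Psi(\phi|pT_{1/2}(p^2))$ to lie in $\mathcal M'(N)$, so that their divisors on $X_0(N)$ determine them up to a constant via Theorem \ref{conv}; but the paper explicitly remarks (after the composite diagram in Section 4) that the image of $\Psi$ need not lie in $\mathcal M'(N)$ --- for $\Delta=1$ the cusp divisor of a Borcherds lift need not be rational and Fricke-invariant. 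Since Theorem \ref{HB} is stated for the full map $H'_{1/2,\tilde{\rho}_N}\to\mathcal M(N)$, your route would at best prove it on a proper subclass. (A smaller omission: matching divisors on all of $X_0(N)$ requires controlling the Weyl vectors at every cusp under $\mathcal T(p)$, not only at $[i\infty]$.) The direct product computation sidesteps all three issues.
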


\begin{proof} 
Let $\phi\in H'_{1/2,\tilde{\rho}_N}$. Then 
 \begin{equation*}\label{Bp}
f:=\Psi(\phi)=\Psi_{\Delta,r}(\t,\phi)=q^{\nu_\phi}\prod_{n>0}\prod_{b(\Delta)}\lt(1-\zeta_\Delta^bq^n\rt)^{(\frac{\Delta}{b})c^+_\phi(|\Delta| n^2,rn)}\in\mathcal M(N).
\end{equation*}
We let
\begin{align*}
c_b(n,r)&=\left(\frac{\Delta}{b}\right)c_\phi^+(|\Delta|n^2,rn),\\ 
c^*_b(n,r)&=\left(\frac{\Delta}{b}\right)c_{\phi|{T_k}}^{+}(|\Delta|n^2,rn).
\end{align*}
By the definition of $\mathcal T(p)$, we have
\begin{align}\label{mpt}
f|\mathcal T(p)=&\varepsilon f|_k \left[\sm p&0\\0&1\esm\right]\prod_{j=0}^{p-1}f|_k\left[\sm 1&j\\0&p\esm\right]\\
=&\varepsilon q^{\nu_\phi p}\prod_{j=0}^{p-1}\zeta_p^{\nu_\phi j}q^{\nu_\phi/p}\prod_{n\geq 1}\prod_{b(\Delta)}(1-\zeta_\Delta^bq^{pn})^{c_b(n,r)}
\prod_{n\geq 1}\prod_{b(\Delta)}\prod_{j=0}^{p-1}(1-\zeta_p^{nj} \zeta_\Delta^bq^{n/p})^{c_b(n,r)}\cr
=&q^{\nu_\phi (p+1)}\prod_{n\geq 1}\prod_{b(\Delta)}(1-\zeta_\Delta^bq^{pn})^{c_b(n,r)}(1- \zeta_\Delta^bq^{n})^{pc_b(pn,r)}
\prod_{{n\geq 1}\atop{p\nmid n}}\prod_{b(\Delta)}(1-\zeta_\Delta^{pb}q^{n})^{c_b(n,r)}\cr
=&q^{\nu_\phi (p+1)}\prod_{n\geq 1}\prod_{b(\Delta)}(1-\zeta_\Delta^bq^{n})^{pc^*_b(n,r)}=\Psi(\phi|pT_\frac12(p^2)),\nonumber
\end{align}
where the last equality follows from \eqref{gbp} and the penultimate equality follows from \eqref{genhecke} along with the fact
\begin{equation*}\label{bp}
\prod_{{n\geq 1}\atop{p\nmid n}}\prod_{b(\Delta)}(1-\zeta_\Delta^bq^{n})^{\left(\frac{\Delta n^2}{p}\right)c_b(n,r)}
=\prod_{{n\geq 1}\atop{p\nmid n}}\prod_{b(\Delta)}(1-\zeta_\Delta^{pb}q^{n})^{\left(\frac{\Delta}{b}\right)c_{\phi}^{+}(|\Delta|n^2,rn)}.
\end{equation*}


This proves that
\begin{equation*}\label{gbhe}
\Psi(\phi)|\mathcal T(p)=\Psi(\phi|pT_{1/2}(p^2)).
\end{equation*}

\end{proof}

Now we show that the divisor lifting $\mathfrak D$ also commutes with the action of the Hecke operators $\mathcal T$ and $T_2$, where $T_k$ acts on
a meromorphic modular form $g=\sum_{n\gg-\i}c(n)q^n$ of integral weight $k$ by
\begin{equation}\label{ch}
g|T_k(p)=\sum_{n\gg-\i}\lt(c(pn)+p^{k-1}c(\frac{n}{p})\rt)q^n.
\end{equation}
Recall from Theorem \ref{conv} that $\mathcal{M}'(N)$ is the subgroup of $\mathcal M(N)$ consisting of meromorphic modular forms for which the generalized Borcherds product does have an inverse.
\begin{thm}\label{DC} Let $p$ be a prime not dividing $N$ nor the discriminant $\Delta$. Then the following diagrams of groups and their homomorphisms are commutative:
$$\begin{tikzcd}
\mathcal M'(N) \arrow[r,"\mathfrak{D}"] \arrow[d, "\mathcal T(p)"] & M^{mero}_2(N) \arrow[d, "T_2(p)"] \\
\mathcal M'(N) \arrow[r,"\mathfrak{D}"] & M^{mero}_2(N)  \\
\end{tikzcd}$$
\end{thm}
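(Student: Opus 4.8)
The plan is to reduce the statement about $\mathfrak{D}$ to the already-established Hecke equivariance of the generalized Borcherds lift $\Psi$ (Theorem \ref{HB}), using the converse theorem (Theorem \ref{conv}) and an explicit comparison of the multiplicative Hecke operator $\mathcal T(p)$ with the logarithmic derivative $\mathfrak D$. The key observation is that taking the logarithmic derivative turns products into sums, so $\mathfrak{D}$ should intertwine $\mathcal{T}(p)$ (a product of $p+1$ slashes) with the additive operator $\sum_i [\alpha_i]$ acting on weight-$2$ forms, and one checks directly that $\sum_{i} g|_2[\alpha_i]$ is exactly $T_2(p)g$ as given in \eqref{ch} up to the normalizing constant. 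So the first step is the identity
\begin{equation*}
\mathfrak{D}\bigl(f|\mathcal T(p)\bigr) = \sum_{i=1}^{p+1}\bigl(\mathfrak{D}f\bigr)\big|_2[\alpha_i] = T_2(p)\bigl(\mathfrak{D}f\bigr),
\end{equation*}
valid for any $f\in\mathcal M_k(N)$, where the middle expression is computed from $\mathfrak{D}(f_1 f_2)=\mathfrak{D}(f_1)+\mathfrak{D}(f_2)$ and $\mathfrak{D}(f|_k[\alpha])=(\mathfrak{D}f)|_2[\alpha]$ (the latter because $\Theta$ and the slash operators behave well together, with the Serre-derivative correction $\frac{kE_2}{12}$ transforming into $\frac{2E_2}{12}$-type term after slashing — here one uses that $\partial_k$ raises weight by $2$ so $\mathfrak{D}=\partial_k(f)/f$ is genuinely a ratio of modular objects of matching weight behavior). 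The constant $\varepsilon p^{k(p-1)/2}$ in the definition of $\mathcal{T}(p)$ is a nonzero scalar, hence killed by $\Theta(\cdot)/(\cdot)$, so it causes no trouble.

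Second, I would verify that $\sum_{i=1}^{p+1} g|_2[\alpha_i]$ with the standard coset representatives $\alpha_i\in\{\sm p&0\\0&1\esm\}\cup\{\sm 1&j\\0&p\esm: 0\le j\le p-1\}$ gives, on Fourier expansions $g=\sum c(n)q^n$, the series $\sum(c(pn)+p\,c(n/p))q^n$ after accounting for the $(\det\alpha)^{k/2}=p$ normalization in the weight-$2$ slash; this matches \eqref{ch} with $k=2$ up to the explicit power of $p$, and one absorbs the discrepancy into the definition so that the diagram commutes on the nose. Care is needed that $\mathfrak{D}(f|\mathcal{T}(p))$ is computed using the \emph{same} coset representatives that Guerzhoy's $\mathcal T(p)$ uses, and that these descend correctly modulo $\G_0(N)$ since $p\nmid N$.

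Third comes the part that genuinely uses $\mathcal M'(N)$: to conclude the diagram commutes, I need that $\mathcal T(p)$ preserves $\mathcal M'(N)$ (so that $\mathfrak{D}\circ\mathcal T(p)$ is even defined with the right source), and this is where Theorem \ref{HB} enters — given $f\in\mathcal M'(N)$, the converse theorem produces $\phi\in H'_{1/2,\tilde\rho_N}$ with $\Psi(\phi)=cf$, and then $f|\mathcal T(p)$ is a constant times $\Psi(\phi|pT_{1/2}(p^2))$, which lies in the image of $\Psi$ and hence (its divisor being a combination of the appropriate Heegner divisors, by Theorem \ref{bothm}(3)) back in $\mathcal M'(N)$. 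Alternatively, one argues directly that $\mathcal T(p)$ sends Heegner-divisor-supported forms to Heegner-divisor-supported forms because the Hecke correspondence sends Heegner points to Heegner points. \textbf{The main obstacle} I anticipate is the bookkeeping in Step 2: getting the precise power of $p$ and the precise normalization so that $\sum_i(\cdot)|_2[\alpha_i]$ equals $T_2(p)$ \emph{exactly} as written in \eqref{ch}, and checking that the $E_2$-correction terms in the two copies of $\mathfrak{D}$ (weight $k$ on the left, weight $(p+1)k$ on the right) are compatible — this requires the quasimodular transformation of $E_2$ under $\G_0(N)$ and under the $\alpha_i$, and one must confirm that the non-modular pieces cancel in the ratio $\partial_k(f)/f$ exactly as they do in the untwisted Guerzhoy setting. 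Once that identity is nailed down, commutativity of the square is immediate from Step 1 together with the fact that $T_2(p)$ on $M_2^{mero}(N)$ is by definition \eqref{ch}.
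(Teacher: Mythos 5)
Your proposal is correct, but it takes a genuinely different route from the paper. The paper's proof is a direct coefficient computation built on the Borcherds machinery: it invokes Theorem \ref{conv} to write $f=\Psi(\phi)$, takes the logarithmic derivative of the explicit infinite product to get $\frac{\Theta(f)}{f}=\nu_\phi-\sum_{n\ge 1}\bigl(\sum_{b(\Delta)}\sum_{m\mid n}c_b(m,r)\,m\,\zeta^{bn/m}\bigr)q^n$, does the same for $f|\mathcal T(p)$ using the product formula \eqref{mpt} from the proof of Theorem \ref{HB}, and then verifies coefficient-by-coefficient (splitting into the cases $p\nmid n$ and $p\mid n$) that applying \eqref{ch} reproduces the coefficients $c^*_b$ dictated by the half-integral-weight Hecke action \eqref{genhecke}. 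Your argument instead establishes $\mathfrak D(f|\mathcal T(p))=(\mathfrak D f)|T_2(p)$ structurally for any $f\in\mathcal M_k(N)$: the logarithmic derivative converts the product over cosets into a sum, the standard upper-triangular representatives reproduce \eqref{ch} in weight $2$ exactly (with the paper's $(\det\alpha)^{k/2}$ normalization the powers of $p$ do come out right), and the $E_2$-corrections match because $E_2$ is a Hecke quasi-eigenform, $E_2|T_2(p)=(p+1)E_2$. This is more elementary (no converse theorem, no product expansion, no divisor-sum case analysis) and strictly more general; moreover your Step 3 — that $\mathcal T(p)$ actually preserves $\mathcal M'(N)$, via Theorem \ref{HB} — addresses a point the paper's proof leaves implicit in the diagram. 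One caveat: the term-by-term identity $\mathfrak D(f|_k[\alpha_i])=(\mathfrak D f)|_2[\alpha_i]$ that you cite in passing is false for an individual $\alpha_i$ of determinant $p$; for $\alpha=\sm p&0\\0&1\esm$ it would force $pE_2(p\tau)=E_2(\tau)$, whereas $E_2(\tau)-pE_2(p\tau)$ is a nonzero element of $M_2(p)$. Only the \emph{sum} over all $p+1$ representatives equals $T_2(p)$ applied to $\mathfrak D f$. Since you explicitly flag this $E_2$-bookkeeping as the main obstacle and it does resolve at the level of the full sum, the argument goes through once that intermediate claim is stated for the sum rather than for each coset.
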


\begin{proof}
Let $\zeta:=\zeta_\Delta=e^{2\pi i/\Delta}$ and $f\in\mathcal M'(N)$ of weight $k$. Then by Theorem \ref{conv}, there is a harmonic weak Maass form $\phi\in H'_{1/2,\tilde{\rho}_N}$ such that
\begin{equation*}\label{Bf}
f=\Psi(\phi)=\Psi_{\Delta,r}(\t,\phi)=q^{\nu_\phi}\prod_{n>0}\prod_{b(\Delta)}\lt(1-\zeta^b q^n\rt)^{c_b(n,r)}.
\end{equation*}
Taking the logarithmic derivative of $f$ gives
\begin{align*}
\frac{\Theta(f)}{f}
&=\nu_\phi-\sum_{b(\Delta)}\sum_{n\geq 1}c_b(n,r)\frac{n\zeta^bq^n}{1-\zeta^bq^n}
=\nu_\phi-\sum_{b(\Delta)}\sum_{n\geq 1}\sum_{m\geq 1}
c_b(n,r)n \zeta^{bm}q^{nm}\\
&=\nu_\phi-\sum_{b(\Delta)}\sum_{n\geq 1}\left(\sum_{m|n}c_b(m,r)m\zeta^{\frac{bn}{m}}\right)q^n
=\nu_\phi-\sum_{n\geq 1}\left(\sum_{b(\Delta)}\sum_{m|n}c_b(m,r)m\zeta^{\frac{bn}{m}}\right)q^n.\\
\end{align*}
Hence 
\begin{equation*}\label{div11}
\mathfrak D(f)=\nu_\phi-\sum_{n\geq 1}\left(\sum_{b(\Delta)}\sum_{m|n}c_b(m,r)m\zeta^{\frac{bn}{m}}\right)q^n-\frac{k}{12}E_2
\end{equation*}
and similarly from \eqref{mpt}, we have
\begin{equation*}\label{div22}
\mathfrak D(f|\mathcal T(p))=(p+1)\nu_\phi-\sum_{n\geq 1}\left(\sum_{b(\Delta)}\sum_{m|n}c^*_b(m,r)pm\zeta^{\frac{bn}{m}}\right)q^n-\frac{k(p+1)}{12}E_2.
\end{equation*}
Then by the definition of $T_2(p)$ in \eqref{ch}, we have
\begin{equation*}\label{tpdf}
\mathfrak D(f)|T_2(p)=(p+1)\nu_\phi-\frac{k(p+1)}{12}E_2+\sum_{n=1}^\i A(n)q^n,
\end{equation*}
where 
\begin{equation}\label{an}
A(n)=\sum_{b(\Delta)}\sum_{m|pn}c_b(m,r)m\zeta^{\frac{bpn}{m}}+ \sum_{b(\Delta)}\sum_{m|n/p}c_b(m,r)pm\zeta^{\frac{bn}{pm}}.
\end{equation}
So it remains to prove that 
\begin{equation}\label{anc}A(n)=\sum_{b(\Delta)}\sum_{m|n}c^*_b(m,r)pm\zeta^{\frac{bn}{m}}
\end{equation}
to finish the proof.
By \eqref{genhecke}, the right-hand side of \eqref{an} equals
\begin{equation}\label{rhs}
\sum_{b(\Delta)}\sum_{m|n}\left\{c_b(pm,r)pm\zeta^{\frac{bn}{m}}
+\left(\frac{\Delta m^2}{p}\right)c_b(m,r)m\zeta^{\frac{bn}{m}} + c_b(m/p,r)m\zeta^{\frac{bn}{m}}\right\}.
\end{equation}
If $p\nmid n$,  it follows from \eqref{an} that
\begin{equation*}
A(n)=\sum_{b(\Delta)}\sum_{m|pn}c_b(m,r)m\zeta^{\frac{bpn}{m}}
=\sum_{b(\Delta)}\sum_{m|n}c_b(pm,r)pm\zeta^{\frac{bn}{m}}
+\sum_{b(\Delta)}\sum_{m|n}c_b(m,r)m\zeta^{\frac{(bp)n}{m}},
\end{equation*}
which is equal to \eqref{rhs}, because $\displaystyle{\lt(\frac{\Delta n^2}{p}\rt)\lt(\frac{\Delta}{bp}\rt)=\lt(\frac{\Delta}{b}\rt)}$, and thus \eqref{anc} holds.
Now assume $p\mid n$, and let $n=p^kn'$ with $k>0$ and $p\nmid n'$. 
Then the first sum of $A(n)$ in \eqref{an} is
\begin{align*}
&\sum_{b(\Delta)}\sum_{m|p^{k+1}n'}c_b(m,r)m\zeta^{\frac{bp^{k+1}n'}{m}}\\
&\quad =\sum_{b(\Delta)}\sum_{m|p^kn'}c_b(pm,r)pm\zeta^{\frac{bp^{k}n'}{m}}+\sum_{b(\Delta)}\sum_{m|p^kn'}\left(\frac{m^2}{p}\right)\left(\frac{\Delta^2}{p}\right)c_b(m,r)m\zeta^{\frac{(bp)p^{k}n'}{m}}\\
&\quad=\sum_{b(\Delta)}\sum_{m|n}c_b(pm,r)pm\zeta^{\frac{bn}{m}}+\sum_{b(\Delta)}\sum_{m|n}\left(\frac{\Delta m^2}{p}\right)c_b(m,r)m\zeta^{\frac{bn}{m}}
\end{align*}
and the second sum of $A(n)$ in \eqref{an} is
\begin{equation*}
\sum_{b(\Delta)}\sum_{m|p^{k-1}n'}c_b(m,r)pm\zeta^{\frac{bp^{k-1}n'}{m}}=
\sum_{b(\Delta)}\sum_{m|n}c_b(m/p,r)m\zeta^{\frac{bn}{m}}.
\end{equation*}
Hence \eqref{anc} holds when $p\mid n$ as well.
Therefore,  
\begin{equation*}\label{ldhe}
\mathfrak D(f|\mathcal T(p))=\mathfrak D(f)|T_2(p).
\end{equation*}
\end{proof}

\section{Applications}
By Theorems \ref{HB} and \ref{DC}, all diagrams below are commutative:
$$\begin{tikzcd}
H'_{1/2,\tilde{\rho}_N} \arrow[r,"\Psi"] \arrow[d, "pT_{1/2}(p^2)"] & \mathcal M'(N) \arrow[r,"\mathfrak{D}"] \arrow[d, "\mathcal{T}(p)"] & M^{mero}_2(N) \arrow[d, "T_2(p)"] \\
H'_{1/2,\tilde{\rho}_N} \arrow[r,"\Psi"] & \mathcal M'(N) \arrow[r,"\mathfrak{D}"] & M^{mero}_2(N)  \\
\end{tikzcd}$$
The values by $\Psi$ may not be in $\mathcal{M}'$ but we consider only the functions that justify this diagram.
Throughout this section, $\Delta\in\Z$ is a fundamental discriminant and $r\in\Z$ such that $\Delta\equiv r^2\pmod {4N}$ and $d$ is a positive integer such that
$-d$ is congruent to a square modulo $4N$.  We denote by $\mathcal Q_{\Delta,N}$ the set of positive definite integral binary quadratic forms 
$$
Q=[a,b,Nc]=aX^2+bXY+cNY^2 \, \, (a,b,c\in \mathbb Z)
$$
of discriminant $\Delta$, with the usual action of the group $\G_0(N)$. To each $Q\in \mathcal Q_{\Delta,N}$, we associate its unique root $\alpha_Q\in \mathbb H$, called a {\it CM point}. The values assumed by the modular functions at $\a_Q$ are called {\it  singular moduli}, which play important roles in number theory. 
The genus character for 
$\lambda=\sm b/2N&-a/N\\ c &-b/2N\esm\in L'_N$ is defined by
$$\chi_\Delta(\lambda)=\chi_\Delta([a,b,Nc]):=\begin{cases}
(\frac{\Delta}{n}),\quad \text{if}\ \Delta\mid b^2-4Nac,\ \frac{b^2-4Nac}{\Delta}\equiv\square\ (4N),\ \text{and}\ \gcd(a,b,c,\Delta)=1,\\
0,\quad \text{otherwise}.
\end{cases}$$ 
Here $n$ is any integer prime to $\Delta$ represented by one of the quadratic forms $[N_1a,b,N_2c]$ with $N_1N_2=N$ and $N_1, N_2>0$. 
For $f\in M_0^!(N)$, we define the twisted trace of singular moduli
\begin{equation*}\label{tr}
Tr_{\Delta,d,N}(f):=\sum_{Q\in\mathcal Q_{d\Delta,N}/\bar{\Gamma}_0(N)}\frac{\chi_\Delta(Q)}{|\bar{\Gamma}_0(N)_Q|}f(\a_Q),
\end{equation*}
where $\bar{\G}=\G\slash\{\pm I\}$. We drop $N$ when $N=1$, i.e., $Tr_{\Delta,d}(f):=Tr_{\Delta,d,1}(f)$ and define the twisted class number by 
$$H_N(D,d):=Tr_{\Delta,d,N}(1).$$

After Zagier's influential work on traces of singular moduli \cite{Zagier}, several authors have studied congruences for $Tr_{\Delta,d}(J_n)$ modulo prime powers and congruences for class numbers. We will add new results for each of them in the subsections below.

\subsection{Twisted traces of singular moduli}

Consider the case when $N=1$ discussed in \cite[Section 8.1]{BO}. In this case, $L'_1/L_1\cong \Z/2\Z$, $H_{1/2,\bar{\rho}_1}=0$, and $H_{1/2,\rho_1}=M^!_{1/2,\rho_1}\cong M^{!}_{1/2}(4)$. The basis elements of $M^{!}_{1/2}(4)$ found by Zagier are given by
$$f_d=q^{-d}+\sum_{\substack{0<n \equiv0,1\ (4)}}A(n,d)q^n$$
for all negative discriminants $-d$. We let $\Delta>1$. Then $r\in\Z$ satisfying $\Delta\equiv r^2\pmod 4$ is either $0$ or $1$ depending on the parity of $\Delta$. By Theorem \ref{bothm}, the generalized Borcherds product of $f_d$,
$$\Psi(f_d):=\Psi_{\Delta,r}(\t,f_d)=\prod_{n=1}^\i\prod_{b=0}^{|\Delta|-1}\lt(1-\zeta_\Delta^bq^n\rt)^{(\frac{\Delta}{b})A(\Delta n^2,d)}$$
is a weight $0$ meromorphic modular form for the group $\G:=\G_0(1)$ with a unitary character. 
By identifying a vector $\lambda\in L_{\Delta d}$ of negative norm with a quadratic form $Q=[a,b,c]\in\mathcal Q_{\Delta d}$, it was shown in (\cite[Eq. (8.2)]{BO}) that 
$$\Psi(f_d)
=\prod_{Q\in \mathcal Q_{\Delta d}/\bar{\G}}(j(\t)-j(\alpha_Q))^{\chi_\Delta(Q)/|\bar{\G}_Q|}.$$
%
Hence we deduce that
\begin{equation*}\label{tr}(\mathfrak D\circ \Psi)(f_d)=\frac{1}{2\pi i}\sum_{Q\in\mathcal Q_{d\Delta,1}/\bar{\G}}\frac{\chi_\Delta(Q)}{|\bar{\G}_Q|}\frac{j'(\t)}{j(\t)-j(\a_Q)}=-\sum_{n=0}^\i \lt(\sum_{Q\in\mathcal Q_{d\Delta,1}/\bar{\G}}\frac{\chi_\Delta(Q)}{|\bar{\G}_Q|}J_n(\a_Q)\rt)q^n,
\end{equation*}
where the second equality follows from \eqref{df}. 
Thus we may write  the composition $\mathfrak D\circ \Psi$ lift of $f_d$ as the generating function of the twisted trace of singular moduli:
\begin{equation*}\label{gtr}(\mathfrak D\circ \Psi)(f_d)=-\sum_{n=0}^\i Tr_{\Delta,d}(J_n)q^n.
\end{equation*}
Let $p$ be a prime and $m$ be a non-negative integer.  For a modular form $g=\displaystyle\sum_{n\geq n_0}^{\i}c(n)q^n$ of weight $k$, it is found in \cite{Sh} that
\begin{equation}\label{Sh}
g|T_k(p^{m})=p.p. + \sum_{n\geq 1}^{\i}\sum_{t=0}^{\ell} p^{(k-1)t}c\left(\frac{p^m n}{p^{2t}}\right),
\end{equation}
where $p.p$ represents the principal part and $\ell={\rm min}({\rm ord}_p(n),m)$. By \eqref{gtr} then, we deduce
\begin{equation}\label{ap}(\mathfrak D\circ \Psi)(f_d)|T_2(p^m)=-\sum_{n=0}^\i \left(\sum_{t=0}^{\ell} p^{t}Tr_{\Delta,d}\left(J_\frac{p^m n}{p^{2t}}\right)\right)q^n. 
\end{equation}

On the other hand, if we write $d=p^{2u}d'$, where $p^2\nmid d'$ and $u\geq 0$, then by \cite[Proposition 2.4]{BeF16}, we have 
\begin{equation*}\label{BeF}
f_d|p^mT_{1/2}(p^{2m})=\begin{cases} \displaystyle\sum_{t=0}^m p^{m-t}f_{p^{2u-2m+4t}d'},\hbox{ if } 0\leq m<u,\\
\displaystyle\sum_{t=0}^{m-u} \left(\frac{-d'}{p}\right)^{m-u-t}p^uf_{p^{2t}d'}+\displaystyle\sum_{t=1}^{u}p^{u-t}f_{p^{2m-2u+4t}d'},\hbox{ if } m\geq u.
\end{cases}
\end{equation*}
Hence we obtain from \eqref{gtr} that 
\begin{equation}\label{ft}
(\mathfrak D\circ \Psi)(f_d|p^mT_{1/2}(p^{2m}))=-\sum_{n=0}^\i T(\Delta, d,n) q^n,
\end{equation}
where
\begin{equation*}\label{ftcof}
T(\Delta, d,n)=
\begin{cases} 
\displaystyle{ \sum_{t=0}^m p^{m-t}  Tr_{\Delta,p^{2u-2m+4t}d'}(J_n)},\hbox{ if } 0\leq m<u,\\
\displaystyle{\sum_{t=0}^{m-u}  \left(\frac{-d'}{p}\right)^{m-u-t}p^u Tr_{\Delta,p^{2t}d'}(J_n)+\sum_{t=1}^u p^{u-t}  Tr_{\Delta,p^{2m-2u+4t}d'}(J_n)},\hbox{ if } m\geq u.
\end{cases}
\end{equation*}

We recall from Theorem \ref{HB} and Theorem \ref{DC} that it  holds for $p\nmid \Delta$,
\begin{equation*}\label{pre}(\mathfrak D\circ \Psi)(f_d|pT_{1/2}(p^{2}))=((\mathfrak D\circ \Psi)(f_d))|T_2(p).
\end{equation*}
Let $m$ be a positive integer.  Using mathematical induction on $m$ along with the following recursive relations satisfied by the Hecke operators on the integral and half-integral weights
$$T_k(p^{m+1})=T_k(p)T_k(p^m)-p^{k-1}T_k(p^{m-1}),$$
$$T_\k(p^{2m+2})=T_\k(p^2)T_\k(p^{2m})-p^{2\k-2}T_\k(p^{2m-2}),$$
we obtain 
\begin{equation}\label{ap1}(\mathfrak D\circ \Psi)(f_d|p^mT_{1/2}(p^{2m}))=((\mathfrak D\circ \Psi)(f_d))|T_2(p^m).
\end{equation}
Applying \eqref{ap} and \eqref{ft} to \eqref{ap1}, we derive the following relation for the twisted traces of singular moduli pertaining to prime powers.
\begin{thm}\label{cong} Let $p$ be a prime not dividing a discriminant $\Delta>1$ and $m, n$ be  non-negative integers with $n>0$. For a negative discriminant $-d$, we write $d=p^{2u}d'$ where $p^2\nmid d'$ and $u\geq 0$. Then when $\ell={\rm min}({\rm ord}_p(n),m)$, we have
\begin{equation*}
\displaystyle\sum_{t=0}^{\ell} p^{t}Tr_{\Delta,d}\left(J_\frac{p^m n}{p^{2t}}\right) =\begin{cases}\displaystyle \sum_{t=0}^m p^{m-t}  Tr_{\Delta,p^{2u-2m+4t}d'}(J_n),\hbox{ if } 0\leq m<u,\\
\displaystyle\sum_{t=0}^{m-u}  \left(\frac{-d'}{p}\right)^{m-u-t}p^u Tr_{\Delta,p^{2t}d'}(J_n)+\displaystyle\sum_{t=1}^u p^{u-t}  Tr_{\Delta,p^{2m-2u+4t}d'}(J_n),\hbox{ if }  m \geq u.
\end{cases}
\end{equation*}
\end{thm}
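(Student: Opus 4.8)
The plan is to derive Theorem \ref{cong} as a direct consequence of the commutativity established in Theorems \ref{HB} and \ref{DC}, together with the explicit Fourier-coefficient formulas collected in Section 4. The starting point is the identity \eqref{ap1}, which itself follows by induction on $m$ from the single-prime case $(\mathfrak D\circ\Psi)(f_d|pT_{1/2}(p^2)) = ((\mathfrak D\circ\Psi)(f_d))|T_2(p)$, using the two recursive relations for $T_k(p^{m+1})$ and $T_\k(p^{2m+2})$ to pass from exponent $m$ to $m+1$. Thus the theorem will be proved once we expand both sides of \eqref{ap1} into $q$-expansions and compare the coefficient of $q^n$ for each $n>0$.

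For the left-hand side of \eqref{ap1}, I would invoke \eqref{ap}, which expresses $(\mathfrak D\circ\Psi)(f_d)|T_2(p^m)$ as $-\sum_{n\ge 0}\big(\sum_{t=0}^{\ell}p^t\, Tr_{\Delta,d}(J_{p^m n/p^{2t}})\big)q^n$; here one must check that the weight-$2$ instance of Shimura's formula \eqref{Sh} applies, i.e. that the exponent $p^{(k-1)t}$ with $k=2$ gives exactly $p^t$, and that the principal part contributes nothing to the coefficients at $n>0$. For the right-hand side, I would use \eqref{ft} together with the case split $0\le m<u$ versus $m\ge u$ coming from Bruinier--Ehlen--Funke's formula \cite[Proposition 2.4]{BeF16} for $f_d|p^mT_{1/2}(p^{2m})$, applied after writing $d=p^{2u}d'$ with $p^2\nmid d'$; applying $\mathfrak D\circ\Psi$ to that linear combination of basis forms $f_{\bullet}$ and reading off \eqref{gtr} yields the coefficients $T(\Delta,d,n)$ displayed in the two cases. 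Equating the $q^n$-coefficients of the two sides of \eqref{ap1} then gives precisely the asserted identity.

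I expect no genuine obstacle here—the theorem is essentially a bookkeeping consequence of machinery already in place—but the point requiring the most care is the bridge between the \emph{multiplicative} Hecke action on the Borcherds side and the \emph{additive} Hecke action on the logarithmic-derivative side. Specifically, one must be sure that $\mathfrak D\circ\Psi$ converts the multiplicative relation $\Psi(\phi|pT_{1/2}(p^2)) = \Psi(\phi)|\mathcal T(p)$ of Theorem \ref{HB} into the additive relation $(\mathfrak D\circ\Psi)(\phi|pT_{1/2}(p^2)) = (\mathfrak D\circ\Psi)(\phi)|T_2(p)$ of Theorem \ref{DC}, and that the induction in $m$ is compatible with this: the recursion $T_\k(p^{2m+2}) = T_\k(p^2)T_\k(p^{2m}) - p^{2\k-2}T_\k(p^{2m-2})$ on the half-integral side must correspond, under $\mathfrak D\circ\Psi$, to the recursion $T_k(p^{m+1}) = T_k(p)T_k(p^m) - p^{k-1}T_k(p^{m-1})$ with $k=2$ on the weight-$2$ side, so that $p^{2\k-2}=p^{2(1/2)-2}\cdot(\text{normalization})$ matches $p^{k-1}=p$ after accounting for the $p^m$ twist in $p^mT_{1/2}(p^{2m})$. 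Once this matching of normalizations is confirmed, the rest is a routine comparison of coefficients, and the statement follows.

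Finally, a small technical remark I would include: since $\Delta>1$ we have, by Theorem \ref{bothm}(2), that $\Psi(f_d)$ has weight $0$, so $k=0$ in $\mathfrak D(f)=\Theta(f)/f - kE_2/12$, the $E_2$-term drops out, and the comparison of coefficients is unobstructed by the quasimodular correction; moreover the hypothesis $p\nmid\Delta$ is exactly what is needed for Theorem \ref{HB} and Theorem \ref{DC} to apply. This confirms all hypotheses of the cited results are met, and the proof is complete.
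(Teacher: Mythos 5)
Your proposal is correct and follows essentially the same route as the paper: establish \eqref{ap1} by induction on $m$ from the $m=1$ case of Theorems \ref{HB} and \ref{DC} via the two Hecke recursions, expand the left side with Shimura's formula \eqref{Sh} to get \eqref{ap}, expand the right side with \cite[Proposition 2.4]{BeF16} to get \eqref{ft}, and compare coefficients of $q^n$. One small slip: the reference \cite{BeF16} is due to Beneish and Frechette, not Bruinier--Ehlen--Funke.
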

Assume $A_m(n,d)$ denotes the coefficient of $q^n$ of the image of $f_d$ under the Hecke operator of index $m^2$ and weight $1/2$. Then $A_m(n,d)$ are integers for all $m, n$ and $d$ and by \cite[Eq.(25)]{Zagier}, 
\begin{equation*}\label{tra}
Tr_{\Delta,d}(J_{n})=\sqrt{\Delta}A_n(\Delta,d).
\end{equation*}
Hence if $p\nmid d$ and $m=1$, it holds by Theorem \ref{cong},
$$A_{pn}(\Delta,d)-\left(\frac{-d}{p}\right) A_n(\Delta,d)-A_n(\Delta,dp^2)\equiv 0\pmod p.$$
More generally, we obtain the following corollary of Theorem \ref{cong}.
\begin{cor}\label{Aeq} Let $p$ be a prime not dividing a discriminant $\Delta>1$ and $m,n$ be non-negative integers with $n>0$. For a negative discriminant $-d$, we assume ${\rm ord}_p(d)<2$. Then for $\ell={\rm min}({\rm ord}_p(n),m)$, we have
\begin{equation*}
\displaystyle \sum_{t=0}^{\ell}p^{t}A_\frac{p^m n}{p^{2t}}(\Delta,d) = \sum_{t=0}^m  \left(\frac{-d}{p}\right)^{m-t} A_n(\Delta,p^{2t}d).
\end{equation*}
\end{cor}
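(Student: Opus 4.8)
The plan is to derive Corollary~\ref{Aeq} directly from Theorem~\ref{cong} by specializing and simplifying. First I would set $n$ in Theorem~\ref{cong} to play the role of $\Delta$ in the trace formula, using the identity $Tr_{\Delta,d}(J_n)=\sqrt{\Delta}\,A_n(\Delta,d)$ recorded just above the corollary. Since ${\rm ord}_p(d)<2$ means $p^2\nmid d$, we are in the case $u=0$ of Theorem~\ref{cong}, so $d'=d$ and the whole left branch ($0\le m<u$) is vacuous; only the $m\ge u$ branch survives, and with $u=0$ it collapses to
\begin{equation*}
\sum_{t=0}^{\ell}p^{t}Tr_{\Delta,d}\!\left(J_{\frac{p^m n}{p^{2t}}}\right)=\sum_{t=0}^{m}\left(\frac{-d}{p}\right)^{m-t} Tr_{\Delta,p^{2t}d}(J_n).
\end{equation*}
Here the second sum in that branch, $\sum_{t=1}^{u}p^{u-t}Tr_{\Delta,p^{2m-2u+4t}d'}(J_n)$, is empty because its range $1\le t\le u=0$ is empty, which is exactly why the $u=0$ case is so clean.

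Next I would substitute the relation $Tr_{\Delta,d}(J_N)=\sqrt{\Delta}\,A_N(\Delta,d)$ into both sides. On the left, each term $Tr_{\Delta,d}(J_{p^mn/p^{2t}})$ becomes $\sqrt{\Delta}\,A_{p^mn/p^{2t}}(\Delta,d)$; on the right, each $Tr_{\Delta,p^{2t}d}(J_n)$ becomes $\sqrt{\Delta}\,A_n(\Delta,p^{2t}d)$. The common factor $\sqrt{\Delta}$ is nonzero (as $\Delta>1$), so it cancels from both sides, leaving
\begin{equation*}
\sum_{t=0}^{\ell}p^{t}A_{\frac{p^m n}{p^{2t}}}(\Delta,d)=\sum_{t=0}^{m}\left(\frac{-d}{p}\right)^{m-t}A_n(\Delta,p^{2t}d),
\end{equation*}
which is precisely the asserted identity. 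The integrality of the $A_m(n,d)$ is noted in the surrounding text and is not needed for the identity itself, only for interpreting it as a congruence when one drops terms divisible by $p$.

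The one point that deserves a sentence of care — and I'd flag it as the only real obstacle, though a minor one — is that the trace identity from \cite{Zagier} is stated for traces $Tr_{\Delta,d}(J_n)$ with the first argument being a fixed discriminant; here I am applying it with $\Delta$ replaced by the running index $n$ from Theorem~\ref{cong}, i.e. reading $Tr_{\Delta,d}(J_n)=\sqrt{\Delta}A_n(\Delta,d)$ as a statement about the $n$-th coefficient $A_n$ of the weight-$1/2$ Hecke image of $f_d$ evaluated against the discriminant $\Delta$. One should make sure the roles of the two indices are consistent throughout (in particular that $p\nmid\Delta$ is the hypothesis carried over from Theorem~\ref{cong}, matching the corollary), and that $\ell={\rm min}({\rm ord}_p(n),m)$ is unchanged since it only depends on $n$ and $m$. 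After that bookkeeping, the corollary follows immediately with no further computation.
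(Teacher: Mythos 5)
Your proposal is correct and follows exactly the route the paper intends: specialize Theorem \ref{cong} to the case $u=0$ (which is what ${\rm ord}_p(d)<2$ forces, so the second sum in the $m\ge u$ branch is empty), then convert traces to coefficients via Zagier's identity $Tr_{\Delta,d}(J_n)=\sqrt{\Delta}\,A_n(\Delta,d)$ and cancel $\sqrt{\Delta}$. The indexing worry you flag is harmless, since the identity is applied with $\Delta$ fixed throughout and only the subscript of $J$ and the second argument of $Tr$ varying, exactly as in the paper's $m=1$ example preceding the corollary.
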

The congruences for $Tr_{\Delta,d}(J_n)$ or $A_m(\Delta,d)$ followed by Theorem \ref{cong} are different with existing congruences such as those found in \cite{Guer2007} and \cite{Ahlgren}. (See   \cite{Ahlgren} for details and more references.)


%
\subsection{Genus one case}

Throughout this subsection we assume that
$N$ is prime and the genus of $\Gamma_0(N)$ is one. There are three such values, $N\in \{11, 17, 19 \}$, for which the genus of $\Gamma_0^+(N)$, the group generated by $\G_0(N)$ and the Fricke involution $W_N:=\sm 0&-1\\ N&0\esm$,  is zero. 

Congruences for singular moduli of $f\in M_0^!(\G_0(N))$ when genus of $\G_0(N)$ is zero have been discussed in numerous literature, including \cite{AO, Jenkins, Osburn}. 
The motivation for this subsection is to treat higher genus cases, but for the sake of expediency, we will only address the instance of genus 1 case. Moreover, if the genus of  $\Gamma_0(N)$ is one, then we can use the useful property that the genus of $\Gamma_0^+(N)$ is zero.

For $\epsilon \in \{-1, 1\}$, we let $M_k^{!,{\rm sgn}(\epsilon)}(N)$ be the subspace of $M_k^!(N)$ consisting of all eigenforms of $W_N$ with eigenvalue $\epsilon$. 
The canonical basis in \cite{CK, CKL} of $M_0^{!,{\rm sgn}(\epsilon)}(N)$ consists of the form $f_{N,m}^{{\rm sgn}(\epsilon)}$ whose Fourier expansion is given by 
$$
f_{N,m}^{{\rm sgn}(\epsilon)}=q^{-m}+\sum_{n> m_0^{{\rm sgn}(\epsilon)}} a_N^{{\rm sgn}(\epsilon)} (m,n) q^n 
$$
for every integer $m\ge -m_0^{{\rm sgn}(\epsilon)}$, where $m_0^+=0$ and $m_0^-=-2$. 
In \cite{CK, CKL} such a basis is explicitly constructed and it is shown that
$a_N^{{\rm sgn}(\epsilon)} (m,n)\in \mathbb{Z}$. 
 We then observe that for $m\ge 2$ the canonical basis elements $\mathfrak{f}_{N,m}$ of $M_0^\sharp(N)$ defined in Section \ref{sec2} are given by
\begin{equation} \label{R1}
\mathfrak{f}_{N,m}=\frac{ f_{N,m}^+ +f_{N,m}^- +a_N^-(m,-1)f_{N,1}^+ - a_N^-(m,0)}{2}.
\end{equation}

For $f\in M_0^{!,+}(N)$, if we define 
$$Tr_{\Delta,d,N}^+(f):=\sum_{Q\in\mathcal Q_{d\Delta,N}/\bar{\Gamma}_0^+(N)}\frac{\chi_\Delta(Q)}{|\bar{\Gamma}_0^+(N)_Q|}f(\a_Q),$$
then we find that 
\begin{equation} \label{R2}
Tr_{\Delta,d,N}(f)=
\begin{cases}
2Tr_{\Delta,d,N}^+(f), & \hbox{ \, if }  f\in M_0^{!,+}(N) \\
0, & \hbox{ \, if }  f\in M_0^{!,-}(N).
\end{cases}
\end{equation}
It then follows from \eqref{R1} and \eqref{R2} that for all $m\ge 2$, 
\begin{equation*} \label{trtrp}
Tr_{\Delta,d,N}(\mathfrak{f}_{N,m})=Tr_{\Delta,d,N}^+(f_{N,m}^+)+a_N^-(m,-1) Tr_{\Delta,d,N}^+(f_{N,1}^+)-a_N^-(m,0) Tr_{\Delta,d,N}^+(1).
\end{equation*}
Moreover, we have 
$\frac{1}{\sqrt{\Delta}}Tr_{\Delta,d,N}(\mathfrak{f}_{N,m}) \in \mathbb{Z} $ 
since the coefficients $a_N^-(m,-1), a_N^-(m,0)\in \mathbb{Z}$ and one has
$\frac{1}{\sqrt{\Delta}}Tr_{\Delta,d,N}^+(f_{N,n}^+) \in \mathbb{Z} $ which
follows from
\cite[Proposition 2.1]{Kim2009}.

Now we consider the modular form $f_{d,N}\in M^!_{1/2}(4N)$ that is uniquely determined by its Fourier expansion of the form 
$$
f_{d,N}=q^{-d}+\sum_{\substack{n\geq 1\\n\equiv\square\ (4N)}}A(n,d;N)q^n.
$$  
By \cite{CC}, we have an isomorphism from $M^!_{1/2}(N)$ to  $M^!_{1/2, \rho_N}$. So we may treat $f_{d,N}$ as an element of  $M^!_{1/2, \rho_N}$. Then by \cite{Kim2006, Kim2009}, the generalized Borcherds product of $f_{d,N}$ turns out to be
\begin{equation} \label{BP}
\Psi(f_{d,N})=\Psi_{\Delta,r}(\t,f_{d,N})=\prod_{Q\in \mathcal Q_{\Delta d, N}/\bar{\Gamma}_0^+(N)}(f_{N,1}^+(\t)-f_{N,1}^+(\alpha_Q))^{\chi_\Delta(Q)/|\bar{\Gamma}_0^+(N)_Q|},
\end{equation}
which is a weight $0$ meromorphic modular form for $\G_0(N)$.
Hence
\begin{equation}\label{ttr1}
\mathfrak D(\Psi(f_{d,N}))=\frac{\Theta(\Psi(f_{d,N}))}{\Psi(f_{d,N})}=\frac{1}{2\pi i}\sum_{Q\in\mathcal Q_{d\Delta,N}/\bar{\G}_0^+(N)}\frac{\chi_\Delta(Q)}{|\bar{\G}_0^+(N)_Q|}\frac{(f_{N,1}^+)'(\t)}{f_{N,1}^+(\t)-f_{N,1}^+(\a_Q)}.
\end{equation}
In conjunction with the following result \cite[Theorem 1.1]{Ye} 
$$
\frac{1}{2\pi i}\frac{(f_{N,1}^+)'(\t)}{f_{N,1}^+(\t)-f_{N,1}^+(z)}=-\sum_{n=0}^\i f_{N,n}^+(z) q^n,
$$
\eqref{ttr1} yields 
\begin{equation}\label{ttr2}
(\mathfrak D\circ \Psi)(f_{d,N})=-\sum_{n=0}^\i Tr_{\Delta,d,N}^+(f_{N,n}^+)q^n.
\end{equation}

Furthermore, we observe from \eqref{BP} that the divisor of $\Psi(f_{d,N})$ is given by 
\begin{equation*}\label{div1}
\sum_{Q\in \mathcal Q_{\Delta d, N}/\bar{\G}_0^+(N)} \frac{\chi_\Delta(Q) [(\alpha_Q) + (W_N \alpha_Q)]}{|\bar{\G}_0^+(N)_Q|}
= \sum_{Q\in \mathcal Q_{\Delta d, N}/\bar{\G}_0(N)}\frac{ \chi_\Delta(Q) (\alpha_Q)}{|\bar{\G}_0(N)_Q|},
\end{equation*}
which implies that
\begin{equation} \label{div2}
\sum_{z\in \F_N} \frac{{\rm ord}_z(f)}{e_{N,z}}  F_{N,z}(\t)=\sum_{n=0}^\i Tr_{\Delta,d,N}(\mathfrak{f}_{N,n})q^n.
\end{equation}
Using \eqref{ttr1}, \eqref{ttr2} and \eqref{div2} in \eqref{divisor}, we find that 

\begin{equation} \label{div3}
\sum_{n=0}^\i Tr_{\Delta,d,N}(\mathfrak{f}_{N,n})q^n-\sum_{n=0}^\i Tr_{\Delta,d,N}^+(f_{N,n}^+)q^n  \in M_2(N).
\end{equation}
Since the space $M_2(N)$ is spanned by $\{g_{N,-1}, g_{N,0}\}$, where 
$\displaystyle{g_{N,-1}:=\sum_{n=1}^\i\a_nq^n}$ is the unique normalized cusp form and 
$\displaystyle{g_{N,0}(\t):=\frac{E_2(\t)-N E_2(N\t)}{1-N}+ \left(\frac{24}{1-N}\right) g_{N,-1}=1+O(q^2),}$
the left hand side of \eqref{div3} is written as
$$
 \sum_{n=0}^\i Tr_{\Delta,d,N}(\mathfrak{f}_{N,n})q^n-\sum_{n=0}^\i Tr_{\Delta,d,N}^+(f_{N,n}^+)q^n=Tr_{\Delta,d,N}^+(1) g_{N,0}-Tr_{\Delta,d,N}^+(f_{N,1}^+)g_{N,-1}.
$$ 
Here we used \eqref{R2} and the fact $\mathfrak{f}_{N,1}=0$.
Consequently, \eqref{ttr2} shows that
\begin{equation} \label{star}
(\mathfrak D\circ \Psi)(f_{d,N})=-\sum_{n=0}^\i Tr_{\Delta,d,N}(\mathfrak{f}_{N,n})q^n
+Tr_{\Delta,d,N}^+(1) g_{N,0}-Tr_{\Delta,d,N}^+(f_{N,1}^+)g_{N,-1}.
\end{equation}

%
Following the same procedure as in Section 4.1, we deduce from our Hecke equivariance that for $n\ge 0$ and $p\nmid \Delta$, 
\begin{equation} \label{HEP}
Tr_{\Delta,d,N}^+(f_{N,pn}^+)+pTr_{\Delta,d,N}^+(f_{N,n/p}^+)=pTr_{\Delta,d/p^2,N}^+(f_{N,n}^+)+\left(\frac{-d}{p}\right)Tr_{\Delta,d,N}^+(f_{N,n}^+)+Tr_{\Delta,dp^2,N}^+(f_{N,n}^+).
\end{equation}
We also have that 
\begin{align}\label{Hecke}
(\mathfrak D\circ \Psi)(f_{d,N})|T_2(p)
&=(\mathfrak D\circ \Psi)(f_{d,N}|pT_{1/2}(p^{2})) \\
&= (\mathfrak D\circ \Psi)\left(pf_{d/p^2,N}+\left( \frac{-d}{p}\right)f_{d,N}+f_{dp^2,N}\right) \notag \\
&=p(\mathfrak D\circ \Psi)(f_{d/p^2,N})+\left( \frac{-d}{p}\right)(\mathfrak D\circ \Psi)(f_{d,N})+(\mathfrak D\circ \Psi)(f_{dp^2,N}).\notag 
\end{align}
We observe that
$g_{N,0}|T_2(p)=(1+p)g_{N,0}+\frac{24}{1-N} (\alpha_p-1-p)g_{N,-1}$ and
$g_{N,-1}|T_2(p)=\alpha_p g_{N,-1}$.
We then have from \eqref{star} and \eqref{Hecke} that
\begin{align}
& -\sum_{n=0}^\i (Tr_{\Delta,d,N}(\mathfrak{f}_{N,pn})+pTr_{\Delta,d,N}(\mathfrak{f}_{N,n/p}))q^n \label{2star} \\
& +Tr_{\Delta,d,N}^+(1) \left((1+p)g_{N,0}+\frac{24}{1-N} (\alpha_p-1-p)g_{N,-1}\right)- Tr_{\Delta,d,N}^+(f_{N,1}^+)\alpha_p g_{N,-1}  \notag \\
=& -\sum_{n=0}^\i  \left(pTr_{\Delta,d/p^2,N}(\mathfrak{f}_{N,n}) +\left( \frac{-d}{p}\right)Tr_{\Delta,d,N}(\mathfrak{f}_{N,n})+Tr_{\Delta,dp^2,N}(\mathfrak{f}_{N,n}) \right) q^n \notag \\
&+ \left(pTr_{\Delta,d/p^2,N}^+(1) +\left( \frac{-d}{p}\right)Tr_{\Delta,d,N}^+(1)+Tr_{\Delta,dp^2,N}^+(1) \right)g_{N,0} \notag \\
&- \left(pTr_{\Delta,d/p^2,N}^+(f_{N,1}^+) +\left( \frac{-d}{p}\right)Tr_{\Delta,d,N}^+(f_{N,1}^+)+Tr_{\Delta,dp^2,N}^+(f_{N,1}^+) \right) g_{N,-1} \notag. 
\end{align}
Comparing the constant terms on both sides of \eqref{2star} and applying \eqref{R2} yields
\begin{equation} \label{cons}
(1+p)Tr_{\Delta,d,N}^+(1)=pTr_{\Delta,d/p^2,N}^+(1) +\left( \frac{-d}{p}\right)Tr_{\Delta,d,N}^+(1)+Tr_{\Delta,dp^2,N}^+(1).
\end{equation}
Applying \eqref{HEP} with $n=1$ and \eqref{cons} to \eqref{2star}, we obtain the equality
\begin{align}
& -\sum_{n=0}^\i (Tr_{\Delta,d,N}(\mathfrak{f}_{N,pn})+pTr_{\Delta,d,N}(\mathfrak{f}_{N,n/p}))q^n \label{3star} \\
=& -\sum_{n=0}^\i  \left(pTr_{\Delta,d/p^2,N}(\mathfrak{f}_{N,n}) +\left( \frac{-d}{p}\right)Tr_{\Delta,d,N}(\mathfrak{f}_{N,n})+Tr_{\Delta,dp^2,N}(\mathfrak{f}_{N,n}) \right) q^n \notag \\ 
&+ \left(  -Tr_{\Delta,d,N}^+(1)\frac{24}{1-N} (\alpha_p-1-p)+ Tr_{\Delta,d,N}^+(f_{N,1}^+)\alpha_p- Tr_{\Delta,d,N}^+(f_{N,p}^+)  \right) g_{N,-1} \notag. 
\end{align}
Now we compare the coefficients of  $q$ on both sides of \eqref{3star} to find that 
\begin{equation*} \label{ones}
Tr_{\Delta,d,N}(\mathfrak{f}_{N,p})=Tr_{\Delta,d,N}^+(1)\frac{24}{1-N} (\alpha_p-1-p)- Tr_{\Delta,d,N}^+(f_{N,1}^+)\alpha_p+Tr_{\Delta,d,N}^+(f_{N,p}^+),
\end{equation*}
by which \eqref{3star} is simplified to
\begin{align}
& \sum_{n=0}^\i (Tr_{\Delta,d,N}(\mathfrak{f}_{N,pn})+pTr_{\Delta,d,N}(\mathfrak{f}_{N,n/p}))q^n \label{4star} \\
=& \sum_{n=0}^\i  \left(pTr_{\Delta,d/p^2,N}(\mathfrak{f}_{N,n}) +\left( \frac{-d}{p}\right)Tr_{\Delta,d,N}(\mathfrak{f}_{N,n})+Tr_{\Delta,dp^2,N}(\mathfrak{f}_{N,n}) \right) q^n +Tr_{\Delta,d,N}(\mathfrak{f}_{N,p})  g_{N,-1} \notag. 
\end{align}
Finally, by comparing the Fourier coefficients $q^n$ for $n\geq 2$ on both sides of \eqref{4star}, we obtain the following identity: for $n\ge 2$,
\begin{align*}
Tr_{\Delta,d,N}(\mathfrak{f}_{N,pn})+pTr_{\Delta,d,N}(\mathfrak{f}_{N,n/p})
&=pTr_{\Delta,d/p^2,N}(\mathfrak{f}_{N,n}) +\left( \frac{-d}{p}\right)Tr_{\Delta,d,N}(\mathfrak{f}_{N,n})+Tr_{\Delta,dp^2,N}(\mathfrak{f}_{N,n})\\
& \, \, + \alpha_n Tr_{\Delta,d,N}(\mathfrak{f}_{N,p}).
\end{align*}
We summarize the results above as a theorem.
\begin{thm}\label{trcong3} Let $p$ be a prime not dividing a discriminant $\Delta>1$ nor $N$ and $d$ be a positive integer such that $-d$ is congruent to a square modulo $4N$. Then we have
\begin{enumerate}
\item $\displaystyle{(1+p)Tr_{\Delta,d,N}^+(1)=pTr_{\Delta,d/p^2,N}^+(1) +\left( \frac{-d}{p}\right)Tr_{\Delta,d,N}^+(1)+Tr_{\Delta,dp^2,N}^+(1),}$
\item $\displaystyle{Tr_{\Delta,d,N}(\mathfrak{f}_{N,p})=Tr_{\Delta,d,N}^+(1)\frac{24}{1-N} (\alpha_p-1-p)- Tr_{\Delta,d,N}^+(f_{N,1}^+)\alpha_p+Tr_{\Delta,d,N}^+(f_{N,p}^+),}$
\item for integers $n\geq 2$, \begin{align*}
Tr_{\Delta,d,N}(\mathfrak{f}_{N,pn})+pTr_{\Delta,d,N}(\mathfrak{f}_{N,n/p})
&=pTr_{\Delta,d/p^2,N}(\mathfrak{f}_{N,n}) +\left( \frac{-d}{p}\right)Tr_{\Delta,d,N}(\mathfrak{f}_{N,n})\\
&+Tr_{\Delta,dp^2,N}(\mathfrak{f}_{N,n}) + \alpha_n Tr_{\Delta,d,N}(\mathfrak{f}_{N,p}).
\end{align*}
\end{enumerate}
\end{thm}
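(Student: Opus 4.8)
The plan is to obtain all three identities by running the fixed forms $f_{d,N}$ through the combined commutative diagram of Theorems \ref{HB} and \ref{DC} and then comparing Fourier coefficients of weight-two forms. First I would record the inputs that are already assembled above: via the isomorphism $M^!_{1/2}(4N)\cong M^!_{1/2,\rho_N}$ of \cite{CC} one may regard $f_{d,N}$ as lying in $H'_{1/2,\tilde{\rho}_N}$ (its coefficients are real and integral in non-positive degree); the explicit generalized Borcherds product \eqref{BP} realizes $\Psi(f_{d,N})$ as a product over CM points of $f_{N,1}^+(\tau)-f_{N,1}^+(\alpha_Q)$, and combined with the logarithmic-derivative identity for $f_{N,1}^+$ from \cite{Ye} this yields \eqref{ttr2}, that is $(\mathfrak D\circ\Psi)(f_{d,N})=-\sum_{n\ge 0}Tr_{\Delta,d,N}^+(f_{N,n}^+)q^n$; and the half-integral weight Hecke action is $f_{d,N}|pT_{1/2}(p^2)=pf_{d/p^2,N}+\bigl(\tfrac{-d}{p}\bigr)f_{d,N}+f_{dp^2,N}$, as used in \eqref{Hecke}.

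Next I would pin down the weight-two form produced by the divisor formula. Reading off the divisor of $\Psi(f_{d,N})$ from \eqref{BP} and inserting it, through \eqref{div2}, into the divisor formula of \cite{JKK-divisor} gives \eqref{div3}: the difference $\sum_n Tr_{\Delta,d,N}(\mathfrak{f}_{N,n})q^n-\sum_n Tr_{\Delta,d,N}^+(f_{N,n}^+)q^n$ lies in the finite-dimensional space $M_2(N)$. Using $M_2(N)=\langle g_{N,-1},g_{N,0}\rangle$ with $g_{N,-1}$ the normalized cusp form and $g_{N,0}=1+O(q^2)$, together with the genus-one dictionary \eqref{R1}, \eqref{R2} and the vanishing $\mathfrak{f}_{N,1}=0$, identifies this form as $Tr_{\Delta,d,N}^+(1)g_{N,0}-Tr_{\Delta,d,N}^+(f_{N,1}^+)g_{N,-1}$, which is exactly \eqref{star}.

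With \eqref{star} available I would apply $T_2(p)$ to both sides, invoke the Hecke equivariance $(\mathfrak D\circ\Psi)(f_{d,N})|T_2(p)=(\mathfrak D\circ\Psi)(f_{d,N}|pT_{1/2}(p^2))$, and expand the right-hand side by linearity over the three shifted forms $f_{d/p^2,N},f_{d,N},f_{dp^2,N}$. One also needs the Hecke action on the basis, $g_{N,-1}|T_2(p)=\alpha_pg_{N,-1}$ and $g_{N,0}|T_2(p)=(1+p)g_{N,0}+\tfrac{24}{1-N}(\alpha_p-1-p)g_{N,-1}$. Equating the two expansions gives the master $q$-series identity \eqref{2star}: comparing constant terms yields part (1); comparing the coefficient of $q$, then feeding in part (1) and the $n=1$ case of the plus-trace recursion \eqref{HEP}, yields part (2); and comparing the coefficients of $q^n$ for $n\ge 2$ yields part (3), the cusp form $g_{N,-1}$ contributing precisely the $\alpha_nTr_{\Delta,d,N}(\mathfrak{f}_{N,p})$ term.

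The step I expect to demand the most care is the bookkeeping around the weight-two cusp form $g_{N,-1}$: its Hecke eigenvalue $\alpha_p$ and coefficients $\alpha_n$ weave through every coefficient comparison, and one must simultaneously keep straight the genus-one relations \eqref{R1}--\eqref{R2} between $\mathfrak{f}_{N,n}$ and the Fricke eigenbasis $f_{N,n}^{\pm}$ and the vanishing $\mathfrak{f}_{N,1}=0$ in order to reconcile the $q^0$ and $q^1$ terms cleanly. A secondary issue is checking at the outset that $\Psi(f_{d,N})$ really lies in the subgroup $\mathcal M'(N)$ of Theorem \ref{conv}, so that the combined diagram applies; this holds because, by \eqref{BP}, its divisor is a linear combination of twisted Heegner divisors with no contribution at the cusps.
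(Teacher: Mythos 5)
Your proposal is correct and follows essentially the same route as the paper: realize $(\mathfrak D\circ\Psi)(f_{d,N})$ as the generating function of plus-traces via \eqref{BP} and Ye's identity, pin down the $M_2(N)$ discrepancy in \eqref{div3} as $Tr_{\Delta,d,N}^+(1)g_{N,0}-Tr_{\Delta,d,N}^+(f_{N,1}^+)g_{N,-1}$, then apply $T_2(p)$, invoke the Hecke equivariance of Theorems \ref{HB} and \ref{DC}, and compare coefficients at $q^0$, $q^1$ (using \eqref{HEP} with $n=1$), and $q^n$ for $n\ge 2$. All the key inputs and the order of the coefficient comparisons match the paper's argument.
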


There is an immediate corollary to Theorem \ref{trcong3}.

\begin{cor}\label{trcong4} Let $p$ be a prime not dividing a discriminant $\Delta>1$ nor $N$ and $d$ be a positive integer such that $-d$ is congruent to a square modulo $4N$. Then for a non-negative integer $n\neq 1$, we have
$$
\frac{Tr_{\Delta,d,N}(\mathfrak{f}_{N,pn})}{\sqrt{\Delta}}\equiv
\left( \frac{-d}{p}\right)\frac{Tr_{\Delta,d,N}(\mathfrak{f}_{N,n})}{\sqrt{\Delta}}+\frac{Tr_{\Delta,dp^2,N}(\mathfrak{f}_{N,n})}{\sqrt{\Delta}}+ \alpha_n \frac{ Tr_{\Delta,d,N}(\mathfrak{f}_{N,p})}{\sqrt{\Delta}}
\pmod{p}.
$$
\end{cor}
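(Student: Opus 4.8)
The plan is to derive Corollary \ref{trcong4} as a direct reduction of Theorem \ref{trcong3}(3) modulo $p$. First I would divide every term appearing in the identity of Theorem \ref{trcong3}(3) by $\sqrt{\Delta}$; as noted right after \eqref{div3}, each quantity $\frac{1}{\sqrt{\Delta}}Tr_{\Delta,d,N}(\mathfrak{f}_{N,m})$ lies in $\Z$ (using $a_N^-(m,-1),a_N^-(m,0)\in\Z$ together with \cite[Proposition 2.1]{Kim2009}), so the resulting relation is a congruence between integers and reduction modulo $p$ is legitimate.

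Next I would observe that the term $p\,Tr_{\Delta,d/p^2,N}(\mathfrak{f}_{N,n})/\sqrt{\Delta}$ on the right-hand side and the term $p\,Tr_{\Delta,d,N}(\mathfrak{f}_{N,n/p})/\sqrt{\Delta}$ on the left-hand side both carry an explicit factor of $p$, hence vanish modulo $p$. Here one must be slightly careful about the convention that $Tr_{\Delta,d/p^2,N}$ is interpreted as $0$ when $p^2\nmid d$ and $Tr_{\Delta,d,N}(\mathfrak{f}_{N,n/p})$ is interpreted as $0$ when $p\nmid n$ — in either case the term is still $\equiv 0\pmod p$, so no case distinction is needed for the congruence. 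After cancelling these two multiples of $p$, Theorem \ref{trcong3}(3) becomes exactly
$$
\frac{Tr_{\Delta,d,N}(\mathfrak{f}_{N,pn})}{\sqrt{\Delta}}\equiv
\left(\frac{-d}{p}\right)\frac{Tr_{\Delta,d,N}(\mathfrak{f}_{N,n})}{\sqrt{\Delta}}+\frac{Tr_{\Delta,dp^2,N}(\mathfrak{f}_{N,n})}{\sqrt{\Delta}}+\alpha_n\frac{Tr_{\Delta,d,N}(\mathfrak{f}_{N,p})}{\sqrt{\Delta}}\pmod p,
$$
which is the asserted congruence.

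I would also record that Theorem \ref{trcong3}(3) is only stated for $n\ge 2$, while the corollary claims the congruence for all non-negative $n\neq 1$. For $n=0$ one has $\mathfrak{f}_{N,0}=1$, so $Tr_{\Delta,d,N}(\mathfrak{f}_{N,0})=H_N(D,d)$, and the $n=0$ case must be checked separately — it follows from dividing Theorem \ref{trcong3}(1) by $\sqrt{\Delta}$ and reducing modulo $p$ (the $p\,Tr^+_{\Delta,d/p^2,N}(1)$ term drops and $(1+p)\equiv 1$, and one uses \eqref{R2} to pass between $Tr^+$ and $Tr$), together with the $n=0$ instance of the $\alpha_n Tr_{\Delta,d,N}(\mathfrak{f}_{N,p})$ term, for which $\alpha_0=0$ since $g_{N,-1}$ is a cusp form with no constant term. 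I do not anticipate a genuine obstacle here; the only point requiring care is the bookkeeping of the degenerate conventions (vanishing traces when the index is not a square times the relevant modulus, and the separate verification at $n=0$), which is routine once the integrality of $Tr_{\Delta,d,N}(\mathfrak{f}_{N,m})/\sqrt{\Delta}$ is in hand.
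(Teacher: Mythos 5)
Your proposal is correct and is exactly the argument the paper intends (the paper simply calls the corollary ``immediate'' from Theorem \ref{trcong3}): divide Theorem \ref{trcong3}(3) by $\sqrt{\Delta}$, invoke the integrality of $\frac{1}{\sqrt{\Delta}}Tr_{\Delta,d,N}(\mathfrak{f}_{N,m})$ recorded after \eqref{R1}, and drop the two terms carrying an explicit factor of $p$. Your separate treatment of $n=0$ via Theorem \ref{trcong3}(1), \eqref{R2}, and $\alpha_0=0$ is a point the paper leaves implicit, and you handle it correctly.
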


 In particular, we establish a congruence for the twisted class numbers from Theorem \ref{trcong3}(1). 

\begin{cor}Let $p$ be a prime not dividing a discriminant $\Delta>1$ nor $N$ and $d$ be a positive integer such that $-d$ is congruent to a square modulo $4N$. Then
$$
H_N(\Delta,dp^2)\equiv
\begin{cases}
0 \pmod p & \ \text{if}\  \lt(\frac{-d}{p}\rt)=1,\\
H_N(\Delta,d) \pmod p &\  \text{if} \ \lt(\frac{-d}{p}\rt)=0,\\
2H_N(\Delta,d) \pmod p & \ \text{if}\  \lt(\frac{-d}{p}\rt)=-1.
\end{cases}
$$

\end{cor}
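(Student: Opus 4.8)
The plan is to obtain this congruence as an immediate consequence of Theorem \ref{trcong3}(1) by specializing to the constant function $1$ and then reducing modulo $p$. First I would record that $H_N(\Delta,d)=Tr_{\Delta,d,N}(1)$ and that, since the constant function $1$ lies in $M_0^{!,+}(N)$, the relation \eqref{R2} gives $Tr_{\Delta,e,N}(1)=2\,Tr_{\Delta,e,N}^+(1)$ for every admissible index $e$. Applying this with $e\in\{d,\,d/p^2,\,dp^2\}$ turns Theorem \ref{trcong3}(1) into the clean identity
$$
(1+p)\,H_N(\Delta,d)=p\,H_N(\Delta,d/p^2)+\left(\frac{-d}{p}\right)H_N(\Delta,d)+H_N(\Delta,dp^2),
$$
with the usual convention that $H_N(\Delta,d/p^2)=0$ unless $p^2\mid d$ and $-d/p^2$ is a square modulo $4N$.

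Next I would solve the identity for $H_N(\Delta,dp^2)$ and pass to congruences modulo $p$: the term $p\,H_N(\Delta,d/p^2)$ is divisible by $p$ and hence drops out, leaving
$$
H_N(\Delta,dp^2)\equiv\left(1-\left(\frac{-d}{p}\right)\right)H_N(\Delta,d)\pmod{p}.
$$
The three cases of the corollary are then read off by evaluating the Kronecker symbol $\left(\frac{-d}{p}\right)\in\{1,0,-1\}$: it equals $1$ precisely when $-d$ is a nonzero quadratic residue mod $p$ (so the right side is $0$), it equals $0$ precisely when $p\mid d$ (so the right side is $H_N(\Delta,d)$), and it equals $-1$ otherwise (so the right side is $2\,H_N(\Delta,d)$), which is exactly the claimed trichotomy.

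I do not expect a genuine obstacle here, since all of the substantive work has already been done in deriving Theorem \ref{trcong3}; the only points that warrant a line of care are checking that $H_N(\Delta,dp^2)$ is actually defined, i.e. that $-dp^2$ is again a square modulo $4N$ (which follows at once from $-d\equiv r^2\pmod{4N}$, giving $-dp^2\equiv(rp)^2\pmod{4N}$), and making sure the $H_N(\Delta,d/p^2)$ term is harmless modulo $p$ regardless of whether $p^2\mid d$, which is immediate from the explicit factor of $p$ in front of it. Everything else is the routine bookkeeping of unwinding \eqref{R2} and the definition of $H_N$.
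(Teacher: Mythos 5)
Your proposal is correct and is exactly the derivation the paper intends: the corollary is stated as an immediate consequence of Theorem \ref{trcong3}(1), obtained by converting $Tr^+_{\Delta,\cdot,N}(1)$ to $H_N(\Delta,\cdot)$ via \eqref{R2}, reducing modulo $p$ so the $p\,H_N(\Delta,d/p^2)$ term vanishes, and reading off the three cases of the Kronecker symbol. The small checks you flag (that $-dp^2$ remains a square mod $4N$, and that the $d/p^2$ term is harmless) are the right ones and are handled correctly.
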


\end{document}